\newtheorem{theorem}{Theorem}[section]
\newtheorem{proposition}[theorem]{Proposition}
\newtheorem{definition}[theorem]{Definition}
\newcommand{\supp}{\mathrm{supp}}
\newcommand{\al}{\alpha}
\newcommand{\be}{\beta}
\newcommand{\ga}{\gamma}
\newcommand{\la}{\lambda}
\newcommand{\de}{\delta}
\newcommand{\De}{\Delta}
\newcommand{\ka}{\kappa}
\newcommand{\si}{\sigma}
\newcommand{\f}{\frac}
\newcommand{\tfh}{\tfrac{1}{2}}
\newcommand{\cd}{\cdots}
\newcommand{\ld}{\ldots}
\newcommand{\mcM}{\mathcal{M}}
\newcommand{\mcV}{\mathcal{V}}
\newcommand{\mcE}{\mathcal{E}}
\newcommand{\mcF}{\mathcal{F}}
\newcommand{\mcS}{\mathcal{S}}
\newcommand{\mcR}{\mathcal{R}}
\newcommand{\mcB}{\mathcal{B}}
\newcommand{\mcA}{\mathcal{A}}
\newcommand{\mcL}{\mathcal{L}}
\newcommand{\mcP}{\mathcal{P}}
\newcommand{\mcC}{\mathcal{C}}
\newcommand{\msfC}{\mathsf{C}}
\newcommand{\msfc}{\mathsf{c}}
\newcommand{\remind}[1]{{}}
\title{Annular embeddings of permutations for arbitrary genus}
\author{I.P. Goulden\footnote{Department of Combinatorics and
Optimization, University of Waterloo, email: {\small\texttt
ipgoulden@uwaterloo.ca}}~
and William Slofstra\footnote{Department of Mathematics, University of 
California, Berkeley, email: {\small\texttt slofstra@math.berkeley.edu}}}
\date{March 4, 2008}
\begin{document}
\maketitle

\begin{abstract}
In the symmetric group on a set of size $2n$, let $\mcP_{2n}$ denote
the conjugacy class of involutions with no fixed points (equivalently,
we refer to these as ``pairings'', since each disjoint cycle has
length $2$). Harer and Zagier explicitly determined the distribution
of the number of disjoint cycles in the product of a fixed cycle
of length $2n$ and the elements of $\mcP_{2n}$. Their famous result
has been reproved many times, primarily because it can
be interpreted as the genus distribution for $2$-cell embeddings
in an orientable surface, 
of a graph with a single vertex attached to $n$ loops.
In this paper we give a new formula for the cycle distribution
when a fixed permutation with two cycles (say the lengths are $p,q$,
where $p+q=2n$) is multiplied by the elements of $\mcP_{2n}$.
It can be interpreted as the genus distribution for $2$-cell embeddings
in an orientable surface,
of a graph with two vertices, of degrees $p$ and $q$. In terms
of these graphs, the formula involves a parameter that allows
us to specify, separately, the number of edges between the two vertices
and the number of loops at each of the vertices.
The proof is combinatorial, and uses a new algorithm that we
introduce to create all rooted forests containing a given rooted
forest.
\end{abstract}

\section{Introduction}\label{intro}

Let $[p]=\{1,\ldots ,p\}$, and ${\mathcal{S}}_{p}$ be the
set of permutations of $[p]$, for $p\ge 0$. When $p\ge 0$ is even,
let ${\mathcal{P}}_{p}$ be
the set of {\em pairings} on $[p]$, which are partitions of the
set $[p]$ into disjoint pairs (subsets of size $2$). We
refer to the single element of ${\mathcal{P}}_{0}$ as
the {\em empty} pairing. Where the context is appropriate, we shall also
regard ${\mathcal{P}}_{p}$ as the conjugacy class of involutions
with no fixed points in ${\mathcal{S}}_{p}$.  In this latter context, each
pair becomes a disjoint cycle consisting of that pair of elements.
Of course, the number of pairings in ${\mathcal{P}}_{p}$ is 
 $(p-1)!!=\prod_{j=1}^{\tfh p} (2j-1)$, with the empty product convention that $(-1)!!=1$.

Now, for $p>0$ and even, let $\ga_p=(1\, 2\ld p)$, in disjoint cycle notation,
and let ${\mathcal{A}}_{p} =\{ \mu\ga_p^{-1} :\mu\in{\mathcal{P}}_{p}\}$.
Let $a_{p,k}$ be the number of permutations in ${\mathcal{A}}_{p}$
with $k$ cycles in the disjoint cycle representation, for $k\ge 1$.
The generating series for these numbers are given
by $A_p(x)=\sum_{k\ge 1} a_{p,k} x^k$. Harer and Zagier~[\ref{hz}] obtained
the following result.

\begin{theorem}\label{hzthm}
{\em (Harer and Zagier~[\ref{hz}])} For a positive, even integer $p$, with $n=\tfh p$,
$$A_p(x)=\
(2n-1)!!\sum_{k\geq 1} 2^{k-1}{n\choose k-1}{x\choose k}.$$
\end{theorem}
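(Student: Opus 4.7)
The plan is to prove the formula by induction on $n$, deriving a three-term recurrence for $A_{2n}(x)$ from a combinatorial operation on pairings and then checking that the claimed closed form satisfies it. The base case $n=1$ is immediate: the unique pairing is $\{1,2\}$, so $\mu\ga_2^{-1}$ is the identity on $[2]$, giving $A_2(x) = x^2 = \binom{x}{1} + 2\binom{x}{2}$, which matches the formula.

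To obtain the recurrence, I would classify $\mu \in \mcP_{2n+2}$ by the value $j := \mu(2n+2) \in [2n+1]$. Removing the pair $\{j, 2n+2\}$ and relabeling the remaining points produces a pairing $\tilde{\mu} \in \mcP_{2n}$, and the goal is to compare the cycle structure of $\mu\ga_{2n+2}^{-1}$ with that of $\tilde{\mu}\ga_{2n}^{-1}$. Because $\ga_{2n+2}^{-1}$ sends $j+1 \mapsto j$ and $1 \mapsto 2n+2$, the element $2n+2$ sits between $\mu(2n+1)$ and $j$ in its cycle. A case analysis --- distinguishing $j = 2n+1$ (the ``adjacent'' case), the degenerate case $\mu(2n+1) = j$, and whether $\mu(2n+1)$ and $j$ lie in the same or different cycles of the reduced product --- then yields, after summing over $j$, a three-term recurrence of the form
$$(n+1)\,A_{2n+2}(x) \;=\; \alpha(n,x)\,A_{2n}(x) \;+\; \beta(n)\,A_{2n-2}(x),$$
where $\alpha$ is linear in $x$ and $\beta$ is independent of $x$. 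This is essentially the recurrence used in Zagier's original proof.

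The remaining step is to verify that the candidate closed form satisfies this recurrence, which is a finite algebraic exercise: expand both sides in the basis $\{\binom{x}{k}\}_{k\ge 1}$ using $x\binom{x}{k} = k\binom{x}{k} + (k+1)\binom{x}{k+1}$, apply Pascal's rule $\binom{n+1}{k-1} = \binom{n}{k-1} + \binom{n}{k-2}$, and use $(2n+1)!!/(2n-1)!! = 2n+1$ to match the double-factorial prefactors. Comparing coefficients of each $\binom{x}{k}$ reduces the claim to an identity in $n$ and $k$.

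The main obstacle is the cycle-counting case analysis in the recurrence step. The change in cycle count upon deleting $\{j,2n+2\}$ depends delicately on whether the two cyclic neighbors of $2n+2$ in $\mu\ga_{2n+2}^{-1}$ end up in the same or different cycles of $\tilde{\mu}\ga_{2n}^{-1}$, and the various edge cases (such as when $j$ is adjacent to $2n+2$, or when $\mu(2n+1) = j$) each contribute separately. Showing that these contributions telescope into a clean three-term recurrence of the form above, with polynomial coefficients of low degree, rather than a messier expression involving $A_{2n-4}(x)$ or higher order terms, is the technical heart of the argument.
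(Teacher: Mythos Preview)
The paper does not give its own proof of this theorem: Theorem~\ref{hzthm} is stated as the result of Harer and Zagier, with a list of references to the many existing proofs, and the paper's contribution is the two-cycle analogue (Theorem~\ref{mainthm}). So there is no in-paper argument to compare your proposal against.

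As for the proposal itself, the strategy you outline---deriving a three-term recurrence for $A_{2n}(x)$ by deleting the pair through $2n+2$ and tracking the resulting change in cycle count, then verifying that the closed form satisfies it---is indeed one of the standard routes (it is essentially the argument in Zagier~[\ref{z}]). However, what you have written is a plan rather than a proof. You have not actually carried out the cycle-counting case analysis, you have not determined the coefficients $\alpha(n,x)$ and $\beta(n)$, and you have not performed the verification step; you explicitly flag all of these as ``the technical heart of the argument'' still to be done. For the record, the recurrence one obtains is
\[
(n+1)\,A_{2n+2}(x) \;=\; (2n+1)(2x+1)\,A_{2n}(x) \;-\; (2n-1)\,2n\,(2n+1)\,A_{2n-2}(x),
\]
and the closed form does satisfy it via exactly the binomial manipulations you describe. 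So the approach is correct and would go through, but as submitted the proposal stops short of executing any of its steps.
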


Other proofs of Theorem~\ref{hzthm} have been given
by Itzykson and Zuber~[\ref{iz}],
Jackson~[\ref{j}], Kerov~[\ref{ke}],
Kontsevich~[\ref{k}], Lass~[\ref{l}], Penner~[\ref{p}] and Zagier~[\ref{z}] (see
also the survey by Zvonkin~[\ref{zv}], Section 3.2.7 of Lando and Zvonkin~[\ref{lz}] and the
discussion in Section 4 of the paper by Haagerup and
Thorbjornsen~[\ref{ht}]). Recently, Goulden and Nica~[\ref{gn}] gave
a direct bijective proof of Theorem~\ref{hzthm}. In the present
paper, we consider a similar bijective approach to extend this
important result of Harer and Zagier to the case
in which the permutation $\ga_p$ is replaced by a fixed
permutation with two cycles in its disjoint cycle representation.
Some additional notation is required.

Let $[q]'=\{1',\ldots ,q'\}$,
and let ${\mathcal{S}}_{p,q}$ be the
set of permutations of $[p]\cup[q]'$, for $p,q\geq 0$.
Let ${\mathcal{P}}_{p,q}$ be the set of pairings on $[p]\cup[q]'$,
for $p,q\geq 0$, where $p+q$ is even (we refer to the
single element of ${\mathcal{P}}_{0,0}$ as the {\em empty} pairing).
A pair in a pairing is called {\em mixed} if it consists of
one element from $[p]$ and one element from $[q]'$.
Where the context is appropriate, we shall also regard
 ${\mathcal{P}}_{p,q}$ as the conjugacy class of involutions with no
fixed points in ${\mathcal{S}}_{p,q}$.
For $p,q\geq 1$, we consider
the permutation $\ga_{p,q}=(1\, 2 \ldots p)(1'\, 2'\ldots q')$,
and let ${\mathcal{A}}^{(s)}_{p,q} =\{ \mu\ga_{p,q}^{-1} :\mu\in
{\mathcal{P}}_{p,q}\; \mbox{has}\; s \;\mbox{mixed pairs}\}$,
and $a^{(s)}_{p,q,k}$ be the number of permutations
in ${\mathcal{A}}^{(s)}_{p,q}$ with $k$ cycles in the disjoint
cycle representation, for $k\geq 1$.
Consider the generating series
$$A^{(s)}_{p,q}(x)=\sum_{k\geq 1} a^{(s)}_{p,q,k}x^k.$$
The main result of this paper is the following expression for $A^{(s)}_{p,q}(x)$.

\begin{theorem}\label{mainthm}
For $p,q,s\ge 1$, with $p,q,s$ of the same odd-even parity and $n=\tfh (p+q)$, we have
$$A^{(s)}_{p,q}(x) =
 p! q! \sum_{k=1}^{n+1}
\sum_{i=0}^{\lfloor\tfh p\rfloor}\sum_{j=0}^{\lfloor\tfh q\rfloor}
\frac{{x \choose k}{n-i-j \choose k-1}}{2^{i+j}i!j!(n-i-j)!}\De^{(s)}_{k,p,q},$$
where
$$\De^{(s)}_{k,p,q}=
{k-1\choose \tfh (p-s)-i}{k-1\choose \tfh (q-s)-j}-{k-1\choose \tfh (p+s)-i}{k-1\choose \tfh (q+s)-j}.$$
\end{theorem}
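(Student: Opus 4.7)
The plan is to extend the bijective proof of Theorem~\ref{hzthm} given by Goulden and Nica~[\ref{gn}] to the two-cycle setting. In their approach, one encodes a pairing $\mu\in\mcP_p$ for which $\mu\ga_p^{-1}$ has $k$ cycles as a planar rooted tree on $k$ vertices, together with auxiliary chord data recording the non-tree pairs. For the permutation $\ga_{p,q}$ with two cycles, the natural target is a planar rooted forest whose components are distributed between two distinguished ``cycle vertices'' of degrees $p$ and $q$; the parameter $s$ (the number of mixed pairs) governs how many tree-components bridge the two sides.

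To read off the shape of the formula, I would exploit a polynomial identity. For a positive integer $x$, the series $A^{(s)}_{p,q}(x)$ counts pairs consisting of a permutation in $\mcA^{(s)}_{p,q}$ together with a colouring of its cycles by $[x]$; equivalently, $\binom{x}{k}$ times the bracketed coefficient in the theorem should count such a permutation together with a surjection of its cycles onto a $k$-subset of $[x]$. The factor $p!\,q!/(2^{i+j}i!j!(n-i-j)!)$ is the standard symmetry count for pairings of $[p]\cup[q]'$ in which $i$ pairs lie within $[p]$, $j$ lie within $[q]'$, and $n-i-j$ are mixed, together with an auxiliary linear order among the pairs of each type.

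The next step is to apply the new algorithm described in the abstract for building all rooted forests that contain a given rooted forest. Using it, one enumerates, for fixed $(i,j,k)$, the number of valid bijective images whose skeleton has the specified shape parameters. The factor $\binom{n-i-j}{k-1}$ should emerge as the number of ways to choose which $k-1$ of the $n-i-j$ mixed pairs become the ``tree edges'' joining the $k$ forest components, while the pair of binomial coefficients $\binom{k-1}{\tfh(p-s)-i}\binom{k-1}{\tfh(q-s)-j}$ records the distribution of the residual same-side pairs along the rooted structure on the $p$-side and the $q$-side respectively. The subtracted second term in $\De^{(s)}_{k,p,q}$ should arise from an inclusion--exclusion (or sign-reversing involution): the unrestricted forest count overcounts configurations whose effective number of mixed pairs exceeds the target $s$, and the shift from $s$ to $-s$ in the binomial indices subtracts precisely these bad configurations.

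The main technical obstacle will be the design and correctness proof of the rooted-forest-extension algorithm, together with the verification that the number of cycles of $\mu\ga_{p,q}^{-1}$ is the correct statistic on the encoded object. In contrast to the single-cycle case, one must simultaneously control the structure at two distinct cycle vertices, the mixed-versus-same-side character of each pair, and the global cycle count, and these controls interact non-trivially through the bridging pairs. Managing this interaction bijectively is where the bulk of the combinatorial work lies; the final passage from the bijective enumeration to the closed form in the theorem should then reduce to routine manipulation of multinomial and binomial sums.
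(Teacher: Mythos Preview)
Your proposal is a heuristic outline rather than a proof, and several of the combinatorial interpretations you offer are incorrect in ways that would derail the argument. Most basically, the number of mixed pairs is $s$ throughout and is \emph{fixed}; it is not $n-i-j$. In the paper's derivation, the indices $i,j$ in the final sum do not count all same-side pairs (those numbers are $\tfh(p-s)$ and $\tfh(q-s)$, independent of $i,j$); rather, $i$ and $j$ arise from the first reduction (Theorem~\ref{redundmin}), which strips off ``redundant'' non-mixed pairs---those not touching a rightmost vertex---yielding the factor $\binom{p}{2i}(2i-1)!!\binom{q}{2j}(2j-1)!!$. A second reduction (Theorem~\ref{nonmixedprop}) then removes the remaining non-mixed pairs using the Forest Completion Algorithm, replacing them by extra marked cells and contributing the falling factorials $(p-2i)_{\tfh(p-s)-i}(q-2j)_{\tfh(q-s)-j}$. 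Your reading of $\binom{n-i-j}{k-1}$ as a choice of tree edges among mixed pairs, and of $\binom{k-1}{\tfh(p-s)-i}\binom{k-1}{\tfh(q-s)-j}$ as a distribution of residual same-side pairs, does not match what is actually being counted (these factors come from enumerating \emph{vertical} paired arrays, where all pairs are mixed and the parameters $\tfh(p-s)-i$, $\tfh(q-s)-j$ count extra marked cells, not pairs).

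The most serious gap is your expectation that the subtracted term in $\De^{(s)}_{k,p,q}$ will fall out of an inclusion--exclusion or a sign-reversing involution, after which ``routine manipulation'' finishes the job. In the paper this difference is not obtained combinatorially at all: the enumeration of vertical paired arrays (Theorem~\ref{nonemvert}) gives a sum over a ``tail length'' parameter $l$, and converting that to the closed form of Theorem~\ref{closedform} requires a generating-function computation in three variables followed by an application of the Pfaff--Saalsch\"utz ${}_3F_2$ summation. The two binomial products in $\De^{(s)}_{k,p,q}$ are the outputs $R_1$ and $R_2$ of that calculation, with $R_1$ only taking its clean form after the hypergeometric identity is invoked. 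Absent a genuinely new bijective idea that bypasses this step, your plan as written does not reach the stated formula.
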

Note that Theorem~\ref{mainthm} gives a summation of nonnegative terms, since
for all choices of summation indices $k,i,j$ with $k-1\le n-i-j$ (so
that ${n-i-j \choose k-1}$ is nonzero), the difference $\De^{(s)}_{k,p,q}$ is nonnegative.
The proof of Theorem~\ref{mainthm} is based on a combinatorial
model that is developed in Section~\ref{sec2}. As a consequence, it is sufficient
to enumerate a particular graphical object that we call a \textit{paired array}.
We then give two combinatorial
reductions, in Sections~\ref{sec3} and~\ref{sec4}, in terms of
a simpler class of paired arrays called \textit{vertical} paired arrays.
These are explicitly enumerated in Section~\ref{sec5}, which allows
us to complete the proof of Theorem~\ref{mainthm}. One of the combinatorial
conditions on paired arrays is that two graphs associated with them must be acyclic.
Because of this, a key component of Sections~\ref{sec4} and~\ref{sec5} is
the enumeration of rooted forests which contain a given forest as a subgraph.
Thus in Section~\ref{sec35} we give a new bijection for this fundamental
combinatorial problem.
However, before we turn to our combinatorial model
and subsequent reductions, we consider some consequences of Theorem~\ref{mainthm}, and
give some comparisons to results in the existing literature.

A major reason that Harer and Zagier's result (Theorem~\ref{hzthm}) is important
(as evidenced by so many published proofs) is that it can be restated as an
equivalent geometric problem in terms of maps. A {\em map} is an embedding
of a connected graph (with loops and multiple edges allowed) in an orientable surface,
partitioning the surface into disjoint regions (called the {\em faces} of the
map) that are homeomorphic to discs (this is called a two-cell embedding). A
{\em rooted} map is a map with a distinguished edge and incident vertex (so,
the map is ``rooted'' at that end of the distinguished edge). The
well-known embedding theorem allows us to consider this as equivalent to
a pair of permutations and their product (see, e.g., Tutte~[\ref{t}], where
the terminology ``rotation system'' is used to describe this triple of permutations).
From this point of view,
the $k$th coefficient $a_{p,k}$ in
the generating series $A_p(x)$ evaluated in Theorem~\ref{hzthm} is equal to the number
of rooted maps with $1$ vertex, $n$ edges and $k$ faces (where $n=\tfh p$, as
in Theorem~\ref{hzthm}). Denoting the genus of the
surface in which such a map is embedded by $g$, then the Euler-Poincar\'e Theorem
implies that $1-n+k=2-2g$, or that $k=n-2g+1$.

Similarly, Theorem~\ref{mainthm} has a geometric interpretation.
Let $\msfC_{p,q}$ be the conjugacy class of $\mcS_{p+q}$ in which
there are two disjoint cycles, of lengths $p$ and $q$.
Then the coefficient $a^{(s)}_{p,q,k}$ in the generating series $A^{(s)}_{p,q}(x)$ is
equal to $(2n-1)!/|\msfC_{p,q}|$ times the number of rooted
maps with $2$ vertices (of degrees $p$ and $q$), $n$ edges (exactly $s$ of which join the
two vertices together, plus $\tfh (p-s)$ that are loops at the vertex of degree $p$,
plus $\tfh (q-s)$ that are loops at the vertex of
degree $q$), and $k$ faces (where $n=\tfh(p+q)$, as in Theorem~\ref{mainthm}).
In this case, if we denote the genus of the surface in which such a map is embedded by $g$,
then we obtain $k=n-2g$.

Of course, since genus is a nonnegative integer, we must have $a^{(s)}_{p,q,n+1}=0$,
and indeed the coefficient of $x^{n+1}$ in the summation for $A^{(s)}_{p,q}(x)$ given
in Theorem~\ref{mainthm} is zero, since the summand corresponding
to $k=n+1, i=j=0$  (which has ${x\choose n+1}$ as
a factor) is itself
equal to zero.  For the planar case, which corresponds to $g=0$, the only
nonzero summand that contributes to the coefficient of $x^n$ in the
summation of Theorem~\ref{mainthm} corresponds to $k=n,i=j=0$, and
this gives immediately that
$$a^{(s)}_{p,q,n}=s{p\choose\tfh (p-s)}{q\choose\tfh (q-s)}.$$
This checks with the straightforward computation that one
can make to determine this value by elementary means -- there are $s$ edges
between the two vertices; between the ends of these edges at each vertex is
an even number of vertices, joined by loops without crossings (and
there is Catalan number of such arrangements for each such even interval).
\vspace{.1in}

This explains the term ``genus'' in the title;  the term ``annular'' is
adapted from its usage in Mingo and Nica~[\ref{mn}]. It refers to an
equivalent embedding for a map with two vertices, in an annulus. The
ends of the edges incident with one of the vertices (say the one of
degree $p$) are identified with $p$ points arranged around the disc
on the exterior of the annulus, and the ends incident with the other
vertex are identified with $q$ points arranged around the disc
on the interior of the annulus. The points corresponding to the
two ends of an edge are joined by an arc in the interior of the annulus.
\vspace{.1in}

We have been able to find one relevant enumerative result (Jackson~[\ref{j}]) in the literature about
such maps, in which the total number of edges is specified, but not the exact number
joining the two vertices together. To compare this result to our
main result, we must sum
over $s\ge 1$ (since the underlying graph must be connected, then $s$, the
number of edges joining the two vertices together, must be positive), and thus
define
$$A_{p,q}(x)=\sum_{s\ge 1}A^{(s)}_{p,q}(x).$$
Then Jackson~[\ref{j}] has considered the case $p=q=n$, and obtained the
following result, restated in terms of our notation (by applying the
proportionality constant $(2n-1)!/|\msfC_{n,n}|=n$).

\begin{theorem}\label{jint}
{\em (Jackson~[\ref{j}])}
For $n\ge 1$,
$$A_{n,n}(x)=
n!\sum_{j=0}^{\lfloor \tfh (n-1) \rfloor} \sum_{i=0}^{n-2j-1}
\sum_{k=0}^{\lfloor \tfh (n-2j-1) \rfloor}
4^{-k}
{2k\choose k}{n\choose 2k}
{2j\choose j}{n-2j-1\choose i}{x+j+i\choose n}.$$
\end{theorem}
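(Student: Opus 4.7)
The plan is to deduce Theorem~\ref{jint} directly from our main result, Theorem~\ref{mainthm}, by specialization and an $s$-summation. Setting $p=q=n$ in Theorem~\ref{mainthm}, the parity hypothesis forces $s\equiv n\pmod 2$, so summing over all admissible $s\ge 1$ yields
$$A_{n,n}(x) = (n!)^2\sum_{k,i,j}\frac{{x\choose k}{n-i-j\choose k-1}}{2^{i+j}i!j!(n-i-j)!}\,D_{k,i,j},\qquad D_{k,i,j}:=\sum_{s}\De^{(s)}_{k,n,n},$$
where the inner $s$-sum ranges over positive integers with $s\equiv n\pmod 2$.

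To simplify $D_{k,i,j}$, write $\De^{(s)}_{k,n,n}=\phi(\tfh(n-s))-\phi(\tfh(n+s))$ with $\phi(u):={k-1\choose u-i}{k-1\choose u-j}$. The substitutions $u=\tfh(n-s)$ and $u=\tfh(n+s)$ convert the $s$-sum into a signed sum of $\phi(u)$ over complementary ranges of $u\in\{0,\ldots,n\}$; equivalently, after reflecting $u\mapsto n-u$ in the second range,
$$D_{k,i,j}=\sum_{u=0}^{\lfloor (n-1)/2\rfloor}\bigl[\phi(u)-\phi(n-u)\bigr].$$

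Next, to match the resulting expression with Jackson's formula, note that both sides of the claimed identity are polynomials in $x$ of degree $n$. I would expand both in the basis $\{{x+m\choose n}:m\ge 0\}$, using Chu-Vandermonde ${x+m\choose n}=\sum_k {m\choose n-k}{x\choose k}$ to rewrite the factor ${x+j'+i'\choose n}$ in Jackson's formula as a sum over $k$. Comparing coefficients of ${x\choose k}$ reduces Theorem~\ref{jint} to a purely finite binomial identity. On our side, applying Vandermonde's convolution to $\sum_{u} \phi(u)$ (after symmetrization) produces factors of the form ${2k-2\choose \cdot}$, which via the Catalan-type identity $\tfrac{2}{k}{2k-2\choose k-1}=\tfrac{1}{2k-1}{2k\choose k}$ should yield the central binomial $4^{-k'}{2k'\choose k'}$ appearing in Jackson. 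The factors ${2j'\choose j'}$ and ${n-2j'-1\choose i'}$ should then emerge from analogous collapses of the $i$- and $j$-sums.

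The main obstacle is the intricate binomial bookkeeping required to reconcile the quadruple sum $(k,i,j,u)$ arising on our side with Jackson's triple sum over $(k',i',j')$, and in particular to identify the correct central binomial coefficients and index shifts (notably the combination $n-2j'-1$). A further subtlety is the asymmetric range of the $u$-sum in $D_{k,i,j}$, which, for $n$ even, excludes the midpoint $u=n/2$; this can be addressed either by isolating the central term or by exploiting the antisymmetry of $\phi(u)-\phi(n-u)$ under the reflection $u\mapsto n-u$ before applying Vandermonde.
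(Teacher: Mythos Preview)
The paper does not prove Theorem~\ref{jint} at all: it is quoted as a result of Jackson, obtained by matrix integration, and merely restated in the paper's notation. So there is no ``paper's own proof'' to compare against.

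More importantly, your proposed route is precisely the one the authors tried and explicitly abandoned. Immediately after stating Theorems~\ref{jint} and~\ref{gsint} they write that they checked agreement with Theorem~\ref{mainthm} summed over $s\ge 1$ computationally for many $p,q$, but ``have been unable to prove this for all $p,q$, since we have not been able to show that the sum over $s\ge 1$ of the result of Theorem~\ref{mainthm} is equal to the result of Theorem~\ref{gsint}.'' Your proposal is exactly this derivation in the special case $p=q=n$, and the ``main obstacle'' you flag --- the intricate binomial bookkeeping matching the $(k,i,j,u)$ sum to Jackson's $(k',i',j')$ sum --- is not a routine cleanup but the actual content of the problem, which the authors could not resolve. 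Your sketch contains no new idea for overcoming it: the steps you say ``should'' produce the central binomials ${2k'\choose k'}$ and ${2j'\choose j'}$ are assertions, not arguments, and the reflection/Vandermonde manoeuvres you describe do not obviously collapse the sum $\sum_u\bigl[\phi(u)-\phi(n-u)\bigr]$ into anything recognisable as Jackson's summand. As written, this is a plan with a known hard gap rather than a proof.
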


By slightly modifying Jackson's~[\ref{j}] integration argument
we are able to obtain the following expression for $A_{p,q}(x)$,
with arbitrary $p,q$ of the same parity.

\begin{theorem}\label{gsint}
For $1\le p\le q$, with $p+q$ even, and $n=\tfh (p+q)$,
$$A_{p,q}(x)=
p!q!\sum_{j=0}^{\lfloor \tfh (p-1) \rfloor} \sum_{i=0}^{n-2j-1}
\sum_{k=0}^{\lfloor \tfh (p-2j-1) \rfloor}
\f{1}{2^{n-p+2k}k!(p-2k)!(n-p+k)!}
{2j\choose j}{n-2j-1\choose i}{x+j+i\choose n}.$$
\end{theorem}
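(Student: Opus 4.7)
The plan is to adapt Jackson's~[\ref{j}] integration proof of Theorem~\ref{jint}, carrying the cycle lengths $p$ and $q$ separately rather than exploiting the symmetry $p=q=n$. Set $T_{p,q}(N):=\sum_{s\ge 0}A^{(s)}_{p,q}(N)=\sum_{\mu\in\mcP_{p,q}}N^{c(\mu\ga_{p,q}^{-1})}$, so that at positive integer values $x=N$ one has $A_{p,q}(x)=T_{p,q}(x)-A^{(0)}_{p,q}(x)$, where the disconnected term equals $A_p(x)A_q(x)$ by Theorem~\ref{hzthm} applied to each cycle of $\ga_{p,q}$ when $p,q$ are both even, and vanishes when $p,q$ are both odd (since then no pairing can have $s=0$). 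Both sides of the target identity are polynomials in $x$, so it suffices to establish the formula at positive integers $N$, which is exactly where Jackson's integration representation applies.

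In Jackson's argument for $p=q=n$ the integrand is a product of two Hermite-polynomial generating factors, one per cycle of $\ga_{n,n}$, weighted against a Gaussian measure; expanding each factor and evaluating termwise produces his triple sum. The ``slight modification'' is to let the two Hermite factors carry distinct degrees $p$ and $q$, and to redo the bookkeeping. In the resulting expansion the summation variable $k$ (bounded by $\lfloor(p-2j-1)/2\rfloor$, hence by the smaller cycle length) records the number of internal Gaussian pairings produced by the $p$-factor; once $k$ is fixed, the total pair count $n$ forces the number of internal pairings contributed by the $q$-factor to be $n-p+k$, which is the source of the asymmetric denominator $2^{n-p+2k}\,k!\,(p-2k)!\,(n-p+k)!$. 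The outer eigenvalue integration then contributes $\binom{2j}{j}$, $\binom{n-2j-1}{i}$, and the bound $0\le i\le n-2j-1$ exactly as in~[\ref{j}]. A standard Vandermonde rearrangement converts the resulting polynomial in $N$ into the $\binom{N+j+i}{n}$ basis displayed in the statement, and polynomiality then extends the identity to all $x$.

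As a sanity check, when $p=q=n$ the asymmetric factor collapses: $\tfrac{n!\,n!}{2^{2k}(k!)^{2}(n-2k)!}=n!\,4^{-k}\binom{2k}{k}\binom{n}{2k}$, matching the coefficient in Theorem~\ref{jint}; this confirms that the generalisation is purely a bookkeeping extension. The main obstacle I anticipate is precisely that bookkeeping: one must verify that distinct Hermite degrees produce exactly the asymmetric denominator above, that the upper bound on $k$ is genuinely governed by the smaller cycle length rather than by some symmetric average, and that the disconnected subtraction interacts cleanly with the parity constraint $s\equiv p\equiv q\pmod 2$. Everything else should parallel~[\ref{j}] line for line.
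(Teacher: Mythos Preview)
Your proposal is correct and matches the paper's approach exactly: the paper gives no detailed proof of this theorem, stating only that it is obtained ``by slightly modifying Jackson's~[\ref{j}] integration argument'', which is precisely your plan. Your outline is in fact more detailed than the paper's own treatment, and your sanity check that the asymmetric denominator collapses to the coefficient in Theorem~\ref{jint} when $p=q=n$ is an appropriate confirmation.
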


We have checked computationally, with the help of Maple, that Theorems~\ref{jint}
and~\ref{gsint} agree with Theorem~\ref{mainthm}, summed over $s\ge 1$, for
a wide range of values of $p,q$. However, we have been unable to
prove this for all $p,q$, since we have not been able to show that the
sum over $s\ge 1$ of the result of Theorem~\ref{mainthm} is
equal to the result of Theorem~\ref{gsint}. Note that the summation in
Theorem~\ref{gsint} can be made symmetrical in $p,q$ (so the ordering $p\le q$ is
not required) by changing the summation variable $k$ to $m=p-2k$.

The method employed in Jackson~[\ref{j}] for Theorem~\ref{jint}, and in many of
the papers listed above that give proofs of Theorem~\ref{hzthm},
is matrix integration. However, we do not see how to adapt the matrix
integration methodology to prove our main result, Theorem~\ref{mainthm},
since it doesn't seem possible to specify that there are exactly $s$ edges
joining the two vertices together in the matrix method. The simplicity
of our result seems to suggest that an extended theory of matrix integration
to allow a specified number of edges between particular vertices might be possible,
and worth investigating. The simplicity of the result also suggests that
there should be a more direct combinatorial proof than the one  presented in
this paper.

\section{The combinatorial model}\label{sec2}

\subsection{Paired surjections}\label{sec21}
The combinatorial model for our proof of Theorem~\ref{mainthm} is based
on a {\em paired surjection}, which has the following definition.

\begin{definition}\label{shiftsydef}
For $p,q,s,k\ge 1$, with $p,q,s$ of the same odd-even
parity, let ${\mathcal{B}}^{(s)}_{p,q,k}$ be the set of
ordered pairs $(\mu ,\phi )$, where $\mu\in{\mathcal{P}}_{p,q}$ has $s$ mixed
pairs, and $\phi$ is
a surjection from $[p]\cup[q]'$ onto $[k]$,
satisfying the condition
\begin{equation}\label{muphicond}
\phi(\mu (i))=\phi(\gamma_{p,q} (i)) \;\;
\mbox{for all} \;i\in [p]\cup[q]'.
\end{equation}
Such an ordered pair $(\mu ,\phi )$ is called a {\em paired surjection}.
Let $b^{(s)}_{p,q,k}=|\mcB^{(s)}_{p,q,k}|$.
\end{definition}

In the following result, the generating series $A^{(s)}_{p,q}(x)$  evaluated
in Theorem~\ref{mainthm} is expressed in terms of the numbers $b^{(s)}_{p,q,k}$ of
paired surjections. Paired surjections are closely related to shift-symmetric
partitions, that arose in Goulden and Nica~[\ref{gn}]. Indeed, the proof of
the following result is identical to the proof of Proposition~1.3 in
Goulden and Nica~[\ref{gn}], and is hence omitted.

\begin{proposition}\label{fallfac}
For $p,q,s\geq 1$, with $p,q,s$ of the same odd-even parity, we have
$$A^{(s)}_{p,q}(x) = \sum_{k\geq 1} b^{(s)}_{p,q,k} {x\choose k}.$$
\end{proposition}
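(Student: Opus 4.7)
The plan is to show that a paired surjection $(\mu,\phi)$ is equivalent data to a permutation $\pi = \mu\gamma_{p,q}^{-1} \in \mathcal{A}^{(s)}_{p,q}$ together with a surjection $\phi:[p]\cup[q]'\to[k]$ that is constant on the disjoint cycles of $\pi$. Once this reformulation is in hand, the proposition reduces to a standard Stirling number identity.

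First I would translate the defining condition~\eqref{muphicond}. Setting $\pi = \mu\gamma_{p,q}^{-1}$, we have $\mu(i) = \pi(\gamma_{p,q}(i))$, so the condition $\phi(\mu(i))=\phi(\gamma_{p,q}(i))$ becomes $\phi(\pi(j))=\phi(j)$ for every $j\in[p]\cup[q]'$ (substituting $j=\gamma_{p,q}(i)$ and using that $\gamma_{p,q}$ is a bijection). Thus $\phi$ is constant on each orbit of $\pi$, i.e.\ on each disjoint cycle. Conversely, given $\pi \in \mathcal{A}^{(s)}_{p,q}$, recover $\mu$ uniquely as $\pi\gamma_{p,q}$, and note that $\mu$ automatically lies in $\mathcal{P}_{p,q}$ with $s$ mixed pairs since $\pi \in \mathcal{A}^{(s)}_{p,q}$. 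So paired surjections $(\mu,\phi)\in \mathcal{B}^{(s)}_{p,q,k}$ are in bijection with pairs $(\pi,\phi)$ where $\pi\in\mathcal{A}^{(s)}_{p,q}$ and $\phi$ is a surjection onto $[k]$ constant on the cycles of $\pi$.

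Second, I would count. If $\pi$ has exactly $j$ cycles, then surjections $[p]\cup[q]'\twoheadrightarrow [k]$ constant on cycles correspond to surjections from the $j$-element set of cycles onto $[k]$, of which there are $k!\,S(j,k)$, where $S(j,k)$ is the Stirling number of the second kind. Summing over $\pi$ gives
$$b^{(s)}_{p,q,k} = \sum_{j\ge 1} a^{(s)}_{p,q,j}\, k!\, S(j,k).$$
Substituting into $\sum_{k\ge1} b^{(s)}_{p,q,k}\binom{x}{k}$, interchanging the order of summation, and applying the classical identity $x^j = \sum_{k\ge 1} S(j,k)\,k!\binom{x}{k}$ (expressing monomials in terms of falling factorials) yields $\sum_{j\ge1} a^{(s)}_{p,q,j} x^j = A^{(s)}_{p,q}(x)$, as desired.

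There is no real obstacle here: the reformulation of the condition~\eqref{muphicond} into ``$\phi$ is constant on cycles of $\pi$'' is a one-line substitution, and the remainder is a standard transform between the monomial and falling-factorial bases via Stirling numbers of the second kind. This is exactly the same mechanism as in the proof of Proposition~1.3 of Goulden and Nica~[\ref{gn}], which is why the authors omit the details.
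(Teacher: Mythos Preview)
Your proof is correct and is precisely the argument the paper has in mind: the substitution $j=\gamma_{p,q}(i)$ turns condition~(\ref{muphicond}) into ``$\phi$ is constant on the cycles of $\pi=\mu\gamma_{p,q}^{-1}$'', and the rest is the standard change of basis $x^j=\sum_{k}S(j,k)\,k!\binom{x}{k}$. This is exactly the mechanism of Proposition~1.3 in Goulden and Nica~[\ref{gn}], which is why the paper omits the details.
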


We consider $(\mu,\phi)\in{\mathcal{B}}^{(s)}_{p,q,k}$, and construct
various objects associated with $(\mu,\phi)$. First let
 $C_{i}=\phi^{-1}(i)\cap [p]$ and $C'_{i}=\phi^{-1}(i)\cap [q]'$,
for $i\in[k]$. Let $D=\{ i:|C_i|\ge 1\}$, and $D'=\{ i:|C'_i|\ge 1\}$, 
and let $m_i=\max C_i$, $i\in D$,  and $m'_i=\max C'_i$, $i\in D'$.
Suppose that $1$ is contained in $C_a$, and that $1'$ is contained in $C'_{b}$.
Define $\psi:D\setminus\{a\}\rightarrow D$ by $\psi(i)=j$ when $\phi(\mu(m_i))=j$,
and $\psi':D'\setminus\{b\}\rightarrow D'$ by $\psi'(i)=j$ when $\phi(\mu(m'_i))=j$.

Now, if $\psi(i)=j$, then (interpreting $1$ as $p+1$) condition~(\ref{muphicond}) means
that $m_i+1\in C_j$, so we have $m_i<m_j$. This implies that the functional digraph
of $\psi$ (the directed graph on vertex-set $D$ with an arc directed
from $i$ to $\psi(i)$  for each $i\in D\setminus\{a\}\}$) is
actually a tree, in which all arcs are directed towards vertex $a$ (which we consider
as the root of this tree). We denote this rooted tree by $T$.
Similarly, the functional digraph of $\psi'$, on vertex-set $D'$, is
also a tree, with all arcs directed towards vertex $b$ (which we consider
as the root of this tree). We denote this rooted tree by $T'$.

One condition that the paired surjection $(\mu,\phi)$ satisfies is that the number of
mixed pairs containing an element of $C_i$ is equal to the number
of mixed pairs containing an element of $C'_i$ for all $i\in [k]$.
(For the reason that this necessary condition
arises, see the discussion of ``unique label recovery'' in the next section.) We
call this the {\em balance}
condition for $(\mu,\phi)$. 
The fact that $\phi$ is a surjection is equivalent to $|C_i|+|C'_i|\ge 1$, for
$i\in [k]$, and we call this the {\em nonempty} condition for $(\mu,\phi)$. 
The fact, established above, that
the graphs of $\psi$ and $\psi'$ are trees is called the {\em tree} condition
for $(\mu,\phi)$.

\subsection{A graphical model}

Now we consider a graphical representation for the
paired surjection $(\mu,\phi)$, called
its {\em labelled paired array}. This is an array of cells, arranged
in $k$ columns,
indexed $1,\ldots ,k$ from left to right, and two rows. In column $i$ of
row $1$, place an ordered list of $|C_i|$  vertices, labelled by the
elements of $C_i$ from left to right; in column $i$ of
row $2$, place an ordered list of $|C'_i|$  vertices, labelled by the
elements of $C'_i$ from left to right. For each pair of $\mu$ draw
an edge between the vertices whose labels are given by the pair.

\begin{figure}[ht]
\begin{center}
\scalebox{.6}{\includegraphics{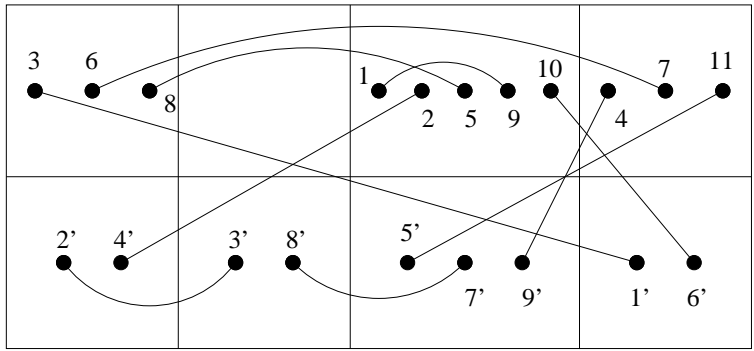}}
\end{center}
\caption{A labelled paired array.}\label{labarr}
\end{figure}

\begin{figure}[ht]
\begin{center}
\scalebox{.6}{\includegraphics{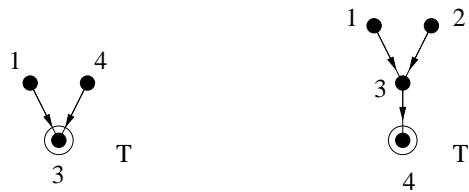}}
\end{center}
\caption{Two rooted trees.}\label{trees}
\end{figure}

\begin{figure}[!b]
\begin{center}
\scalebox{.6}{\includegraphics{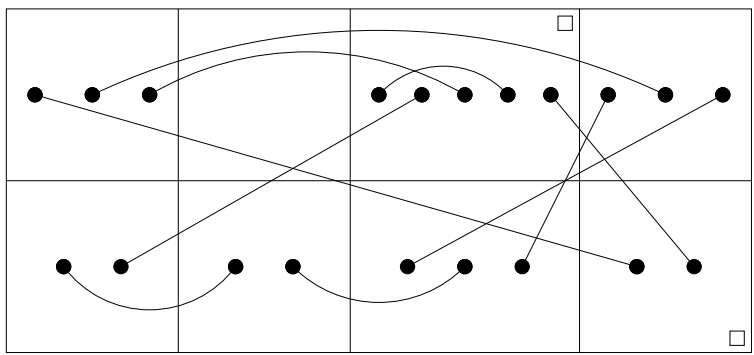}}
\end{center}
\caption{A paired array.}\label{arr}
\end{figure}

For example, when $p=11,q=9,s=5,k=4$, consider $(\mu,\phi)\in {\mathcal{B}}^{(s)}_{p,q,k}$,
given by $\mu=\{\{1,9\},\{5,8\},\{6,7\},$
$\{2',3'\},\{7',8'\},$
$\{2,4'\},\{3,1'\},\{4,9'\},\{10,6'\},\{11,5'\}\}$,
and $\phi^{-1}(1)=\{ 3,6,8,2',4'\}$, $\phi^{-1}(2)=\{ 3',8'\}$,
$\phi^{-1}(3)$ $=\{ 1,2,5,9,10,5',7',9'\}$, $\phi^{-1}(4)=\{ 4,7,11,1',6'\}$.
The corresponding labelled paired array is given
in Figure~\ref{labarr}, and the trees $T$ and $T'$ are given in Figure~\ref{trees}.

Now suppose that we mark the cells in column $a$ of row $1$ and
in column $b$ of row $2$ (by placing
a small box in the top righthand corner of the marked cell in row 1, and in
the bottom righthand corner of the marked cell in row 2), and 
remove the labels from all vertices -- call the resulting object
the {\em paired array} of $(\mu,\phi)$. The ordered list of vertices
in each cell is now to be interpreted as a generic totally ordered
set, with the given left to right order, and the pairing $\mu$ now
acts on these ordered sets in the obvious way. For example, the
paired array determined from the labelled paired array displayed
in Figure~\ref{labarr} is given in Figure~\ref{arr}.

What information have we lost when the labels are removed? The
answer, perhaps surprisingly, is that no information is lost, since
we have \emph{unique label recovery} by applying condition~(\ref{muphicond})
iteratively, as follows: for the first row, place label $1$ on the
leftmost vertex in the marked cell of row $1$; for each $i$ from $2$ to $p$,
place label $i$ on the leftmost unlabelled vertex in column $\phi(\mu(i-1))$ of
row $1$. The same process applied to the second row will place labels $1'$ to $q'$ on
the vertices in row $2$. (The reader can apply this to the paired array
in Figure~\ref{arr}, to check that indeed the labelled paired array
in Figure~\ref{labarr} is recovered in this way.)
The proof that this process always works for a paired array satisfying the
balance, nonempty and tree conditions (and the proof that these conditions are
necessary for this process to work) requires only a slight modification
of the results in Section 3 of~[\ref{gn}], and is not given here
(note that neither the functions $\psi$ and $\psi'$, nor the
trees $T$ and $T'$, depend on the labels
of the vertices, and the number of mixed pairs incident
with the vertices in
%
each cell of the paired array also does not
depend on the labels, so the balance, nonempty and tree conditions can
be checked on the paired array alone).

\subsection{Paired arrays}

This motivates us to define a \emph{paired array} in the abstract (and
not as obtained by removing the labels from a labelled paired array),
and in fact to extend it to a more general class of objects, in the
following definition.

\begin{definition}\label{pairarr}
For $p,q,s,k\ge 1$, with $p,q,s$ of the same odd-even
parity, we define $\mcP\!\mcA^{(s)}_{p,q,k}$ to be the set of arrays of
cells, arranged in $k$ columns and $2$ rows, subject to the following conditions:
\begin{itemize}
\item
Each cell contains an ordered list of vertices, so that there is a total
of $p$ vertices in the first row, and $q$ vertices in the second row.  The vertices
are paired (in the language of graph theory, there is a perfect matching on the vertices),
so that $s$ pairs join a vertex in the first row to a vertex in the second row (these
are the \emph{mixed pairs}). The number of mixed pairs containing a vertex in
column $i$ of row $1$ is equal to the number of mixed pairs containing a vertex in
column $i$ of row $2$, for all $i=1,\ld ,k$ (this is called the \emph{balance} condition).
\item
There is at least one marked (with a small box) cell in row $1$, and we denote the
set of such columns by $R$. There
is at least one marked (with a small box) cell in row $2$, and we denote the
set of such columns by $R'$.
There is at least one vertex in every column that is not
contained in $R\cup R'$ (this is called the \emph{nonempty} condition).
\item
Denote the set of columns in which there is at least one vertex in
row $1$ by $D$,
and the set of columns in which there is at least one vertex in 
row $2$ by $D'$. Define the function $\psi:D\setminus R\rightarrow D$ as
follows: if the rightmost vertex in column $i$ of row $1$ is paired
with a vertex in column $j$, then $\psi(i)=j$. Similarly,
define  $\psi':D'\setminus R'\rightarrow D'$ as
follows: if the rightmost vertex in column $i$ of row $2$ is paired
with a vertex in column $j$, then $\psi'(i)=j$. The functional digraph
of $\psi$ is a forest with $|R|$ components (called the \emph{rightmost forest
for row} $1$); each component is
a tree in which all edges are directed towards an element of $R$ (and this
is called the \emph{root} of that tree).
The functional digraph
of $\psi'$ is a forest with $|R'|$ components (called the \emph{rightmost forest
for row} $2$); each component is
a tree in which all edges are directed towards an element of $R'$ (and this
is called the \emph{root} of that tree). Together, these specify 
the \emph{forest} condition. 
\end{itemize}
The elements of $\mcP\!\mcA^{(s)}_{p,q,k}$ are called \emph{paired arrays}.
A paired array is defined to be \emph{canonical} if $|R|=|R'|=1$.
Define $\mcC^{(s)}_{p,q,k}$ to be the set of canonical
paired arrays in $\mcP\!\mcA^{(s)}_{p,q,k}$, and $c^{(s)}_{p,q,k}=|\mcC^{(s)}_{p,q,k}|$.
\end{definition}

The uniqueness of label recovery described in the previous section proves that
there is a bijection (via labelled paired arrays) between the set $\mcB^{(s)}_{p,q,k}$ of
paired surjections
and the set $\mcC^{(s)}_{p,q,k}$ of
canonical paired arrays, so we have
\begin{equation}\label{equivbe}
b^{(s)}_{p,q,k}=c^{(s)}_{p,q,k}.
\end{equation}
(It is straightforward to verify that the conditions for canonical paired
arrays imply that \emph{every} column is nonempty.)
In this paper we shall determine $b^{(s)}_{p,q,k}$, and
hence the generating series~$A^{(s)}_{p,q}(x)$ via Proposition~\ref{fallfac}, by giving
a combinatorial reduction for canonical paired arrays, thus directly
determining $c^{(s)}_{p,q,k}$.

\section{Removing redundant pairs and minimal paired arrays}\label{sec3}

A {\em redundant} pair in a paired array is a vertex pair
that is \emph{not} mixed, and does \emph{not} contain
a vertex that is rightmost in an unmarked cell.
A {\em minimal} paired array is a paired array without redundant pairs.
We define $\mcM^{(s)}_{p,q,k}$ to be the set of minimal, canonical paired arrays
in $\mcP\!\mcA^{(s)}_{p,q,k}$, and~$m^{(s)}_{p,q,k}=|\mcM^{(s)}_{p,q,k}|$.
In our next result, we remove redundant pairs from a canonical paired array,
and thus show that
the enumeration of canonical paired arrays can be reduced to the
enumeration of minimal, canonical paired arrays.

\begin{theorem}\label{redundmin}
For $p,q,s,k\ge 1$, with $p,q,s$ of the same odd-even parity, we have
$$c^{(s)}_{p,q,k}=\sum_{i,j\ge 0}{p\choose 2i}(2i-1)!!{q\choose 2j}(2j-1)!!\,
m^{(s)}_{p-2i,q-2j,k}.$$
\end{theorem}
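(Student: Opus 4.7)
The plan is to establish a bijection between $\mcC^{(s)}_{p,q,k}$ and the disjoint union, over $i,j\geq 0$, of $\mcM^{(s)}_{p-2i,q-2j,k}$ decorated with ``insertion data'' for each row that accounts for the factor $\binom{p}{2i}(2i-1)!!\binom{q}{2j}(2j-1)!!$. The forward map takes a canonical paired array, identifies its redundant pairs, and deletes the corresponding vertices; let $i$ and $j$ count the redundant pairs removed from rows $1$ and $2$ respectively. Since only non-mixed pairs are deleted, the number of mixed pairs (equal to $s$), the balance condition, the number of columns, and the marks (hence $|R|=|R'|=1$) are all untouched; and since, by definition of redundancy, no vertex that is rightmost in an unmarked cell lies in a redundant pair, both the sets $D,D'$ (restricted to unmarked columns) and the functions $\psi,\psi'$ are left unchanged. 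The only subtlety in showing the output lies in $\mcM^{(s)}_{p-2i,q-2j,k}$ is verifying that the marked column $a$ of row $1$ still belongs to $D$. If $|D|\geq 2$, the tree rooted at $a$ contains some unmarked $c'$ with $\psi(c')=a$, and the pair joining the rightmost vertex of $c'$ with its partner in column $a$ is not redundant, so a row-$1$ vertex in $a$ survives; if $D=\{a\}$, then all $s\geq 1$ row-$1$ endpoints of the mixed pairs lie in $a$ and are not removed. The symmetric argument handles row $2$, and minimality is immediate.

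The reverse map takes a minimal canonical paired array $M\in\mcM^{(s)}_{p-2i,q-2j,k}$ together with insertion data consisting of, for each row, a placement of $2i$ (resp.\ $2j$) new vertices among the cells of $M$ such that no new vertex occupies the rightmost position of an unmarked cell, plus a perfect matching on the newly inserted vertices. The constraint guarantees that every inserted pair is redundant and that the rightmost-in-unmarked vertices (hence $\psi$, $\psi'$) are preserved, so the output lies in $\mcC^{(s)}_{p,q,k}$. Writing $n_c$ for the number of row-$1$ vertices of $M$ in column $c$ and $a$ for the marked row-$1$ column, the number of ways to insert $x_c$ row-$1$ vertices into column $c$ is $\binom{n_c+x_c-1}{x_c}$ when $c\neq a$ (the rightmost slot is forbidden) and $\binom{n_a+x_a}{x_a}$ when $c=a$ (no restriction). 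Using the identities $\sum_x\binom{n-1+x}{x}y^x=(1-y)^{-n}$ and $\sum_x\binom{n+x}{x}y^x=(1-y)^{-n-1}$, summing over all $(x_c)$ with $\sum_c x_c=2i$ gives
\[ [y^{2i}]\!\left(\prod_{c\neq a}\frac{1}{(1-y)^{n_c}}\right)\!\frac{1}{(1-y)^{n_a+1}} = [y^{2i}]\frac{1}{(1-y)^{p-2i+1}} = \binom{p}{2i}, \]
and multiplying by the $(2i-1)!!$ pairings on the inserted vertices yields $\binom{p}{2i}(2i-1)!!$. Row $2$ contributes $\binom{q}{2j}(2j-1)!!$ by the same argument.

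The forward and reverse maps are inverse by construction: in any canonical paired array produced by the reverse map, the inserted pairs are redundant and every original pair of $M$ remains non-redundant (each original rightmost-in-unmarked vertex of $M$ is still rightmost-in-unmarked in the output), so stripping redundant pairs returns $M$ together with its insertion data. Summing over $i,j\geq 0$ then yields the claimed identity. The main obstacle is the verification that the forward map stays within the category of paired arrays, and in particular that the marked column in each row survives in $D$ and $D'$; this requires the tree-structure argument sketched above. Once that is established, the insertion count is a routine generating-function computation.
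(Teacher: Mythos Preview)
Your proof is correct and uses the same underlying bijection as the paper: delete the redundant pairs to obtain a minimal canonical array, and record enough data to reinsert them. The difference is only in how that data is encoded and counted. The paper numbers the row-$1$ vertices \emph{together with the marking box} by $1,\ldots,p+1$ in left-to-right order (with the box rightmost in its cell); since the object in position $p+1$ can never belong to a redundant pair, the labels on the $2i$ deleted vertices constitute a partial pairing on $[p]$, and the count $\binom{p}{2i}(2i-1)!!$ drops out with no further computation. Your cell-by-cell insertion count reaches the same number via the generating-function identity $[y^{2i}](1-y)^{-(p-2i+1)}=\binom{p}{2i}$, which is a slightly longer route to the same destination. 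On the other hand, you handle the verification that the marked column $a$ stays in $D$ after deletion (needed for the forest condition to make sense) more explicitly than the paper does.
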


\begin{proof}
For the proof, it is convenient to introduce some notation.
A {\em partial} pairing on $[p]$ is a
pairing on a set $\alpha\subseteq\, [p]$ of
even cardinality. If $\vert\alpha\vert=2i$, then we
also call it an $i$-partial pairing.
For each of these partial pairings $\mu$, we call $\alpha$ the {\em support},
and denote this by $\supp (\mu ) = \al$.
Let ${\mathcal{R}}_{p,i}$ be
the set of $i$-partial pairings on $[p]$.
Similarly, let ${\mathcal{R}}'_{q,i}$ be the set of $i$-partial pairings on $[q]'$.

Consider an arbitrary $\al\in\mcC^{(s)}_{p,q,k}$. 
We now describe a construction for three objects, $\mu_1$, $\mu_2$ and $\be$, obtained
from $\al$. We begin by attaching
the numbers $1,\ldots,p+1$ to the vertices and the small box in row $1$ of
$\al$, from left to right (under the interpretation that all vertices in column $i$
are to the left of all vertices in column $j$ for $i<j$, and that the small box representing
a marking is rightmost in its cell). Let $\mu_1$ be
the partial pairing consisting of pairs of numbers
attached to the redundant pairs in row  $1$ of $\al$. We follow the analogous procedure
for row $2$: we attach primed numbers $1',\ldots,(q+1)'$ to the vertices and 
small box in row $2$, and let $\mu_2$ be the partial pairing consisting of the pairs of
(primed) numbers attached to the redundant pairs in row $2$ of $\al$. 
Third, we remove all redundant pairs (both vertices and edges) from $\al$,
to get the paired array $\be$, with the same marked cells as $\al$. 
The vertices in each cell of $\be$ have the same relative order as they did
in $\al$.
For example, if $\al$ is the paired
array in Figure~\ref{arr}, then we have $\mu_1=\{\{2,11\},\{ 4,7\}\}$,
$\mu_2=\{\{1',3'\}\}$, and $\be$ is given in Figure~\ref{newminarr}.

\begin{figure}[ht]
\begin{center}
\scalebox{.6}{\includegraphics{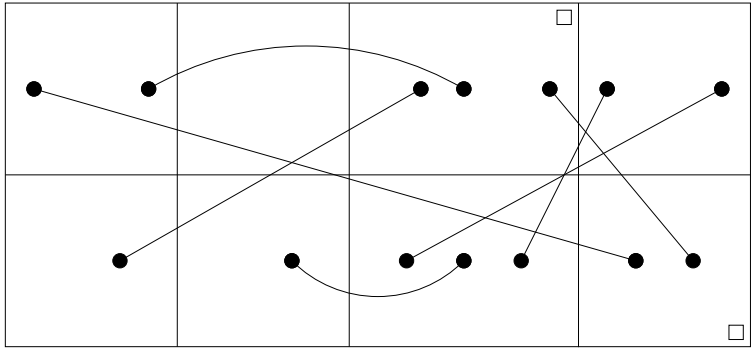}}
\end{center}
\caption{A minimal paired array.}\label{newminarr}
\end{figure}

Now, the only vertex that can be numbered $p+1$ in row $1$ is the
rightmost vertex of the rightmost nonempty cell in row $1$ (if this
cell is not marked), but this vertex cannot appear in a redundant
pair since it is rightmost in an unmarked cell. This implies that the numbers
on redundant pairs in row $1$ all fall in the range $1,\ldots ,p$,
and so $\mu_1$ is a partial pairing on $[p]$. Similarly, $\mu_2$ is
a partial pairing on $[q]'$. 
Also, since the redundant pairs that were removed in the construction do not involve the
rightmost vertex in any nonempty cell, $\be$ has the same
rightmost functions $\psi$ and $\psi'$ as $\al$, and the same mixed pairs
as $\al$, so it must satisfy the balance, nonempty and forest
conditions, which implies that $\be$ is a minimal paired array.
Thus we have a mapping
\begin{equation*}
\xi:\mcE^{(s)}_{p,q,k}\rightarrow\cup_{i,j\ge 0}
\mcR_{p,i}\times\mcR'_{q,j}\times\mcM^{(s)}_{p-2i,q-2j,k}:
\al\mapsto (\mu_1,\mu_2,\be).
\end{equation*}

We now prove that $\xi$ is a bijection.
It is sufficient to describe the inverse mapping, so that we can uniquely
recover $\al$ from an arbitrary triple $(\mu_1,\mu_2,\be)$ $\in\cup_{i,j\ge 0}
\mcR_{p,i}\times\mcR'_{q,j}\times\mcM^{(s)}_{p-2i,q-2j,k}$.
Given $(\mu_1,\mu_2,\be)$, let $\si_i=\supp (\mu_i)$, $i=1,2$,
and $\rho_1=[p+1]\setminus\si_1$, $\rho_2=[q+1]'\setminus\si_2$.
Number the vertices and small box in row $1$ of $\be$ with the elements of
$\rho_1$. Then insert vertices numbered with the elements of $\si_1$,
so that the numbers on all vertices and the small box in row $1$ increase
from left to right, and so that the vertex numbered with $l \in \si_1$
is placed in the same cell as either the vertex or small box numbered
with $\min \{ i \in \rho_1 : i > l \}$. Inserting vertices 
numbered from $\si_2$ in row $2$ using an analogous process,
pairing the inserted vertices with $\mu_1$ and $\mu_2$, and
removing the numbers, we arrive
at a paired array $\al$. It is straightforward to check that
$\al$ satisfies the balance, nonempty, and tree conditions, and
that the process described above reverses the numbering 
scheme used in the mapping $\xi$.  
Thus, we have described the mapping $\xi^{-1}$, and our proof that $\xi$ is
a bijection is complete.
The result follows from the easily established
facts that~$|\mcR_{p,i}|={p\choose 2i}(2i-1)!!$,~$|\mcR'_{q,j}|={q\choose 2j}(2j-1)!!$.
\end{proof}

\section{A bijection for rooted forests}\label{sec35}

In this section, we detour to consider the basic combinatorial question of
how many rooted forests with a given set of root vertices contain a
given rooted forest. We give a bijection for this that differs from
the standard ones in the literature, like the Cycle Lemma (see, \textit{e.g.},
[\ref{stan}], p. 67) or the Pr\"ufer Code (see, \textit{e.g.},
[\ref{stan}], p. 25), because it is more convenient for our constructions
involving paired arrays. 

Suppose we have a rooted forest $F$ (all edges directed towards a root vertex in each
component) on vertex-set $[k]$, whose components are the rooted
trees $T_1,\ld ,T_{m+n}$, $m,n\ge 1$. Suppose the root vertex
of $T_{j}$ is  $r_j$, $j=1,\ld ,m$, and the root vertex of $T_{m+j}$ is $s_j$, $j=1,\ld ,n$.
For convenience, we order the trees so that $r_1<\cdots <r_m$, $s_1<\cdots <s_n$, and
we let $S$ denote the union of the sets of vertices in the trees $T_{m+1},\ldots ,T_{m+n}$.

\begin{theorem}[Forest Completion Theorem]\label{FCT}
There is a bijection between $[k]^{m-1}\times S$ and the set of rooted forests on
vertex-set $[k]$ with root vertices $s_1,\ldots ,s_n$ that contain $F$ as a subforest.
\end{theorem}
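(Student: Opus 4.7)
The plan is to realize the bijection through a Prüfer-type code applied to an auxiliary contracted super-tree. First, observe that a rooted forest $\tilde F$ on $[k]$ containing $F$ with root set exactly $\{s_1,\ldots,s_n\}$ is uniquely determined by a choice, for each former root $r_i$ ($1\le i\le m$), of a parent vertex $p_i\in[k]$; the only nontrivial constraint is acyclicity, since once the underlying digraph is a forest, the roots are automatically $\{s_1,\ldots,s_n\}$ (every $r_i$ now has an outgoing edge, and no $s_j$ does). Contract each tree $T_i$, $1\le i\le m$, to a super-vertex labelled $i$, and contract $T_{m+1},\ldots,T_{m+n}$ together to a single super-vertex $*$. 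Then $(p_1,\ldots,p_m)$ induces an out-degree-one digraph on $\{1,\ldots,m,*\}$ (with $*$ a sink), and acyclicity of $\tilde F$ is equivalent to this super-digraph being a rooted tree on $m+1$ vertices with root $*$.

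Next, introduce a Prüfer-type code for such super-trees: for $j=1,\ldots,m-1$, iteratively remove the smallest non-$*$ leaf $i_j$ of the current super-tree and record its parent $c_j\in\{1,\ldots,m,*\}$; let $i_m$ denote the single non-$*$ super-vertex remaining at the end, whose parent is necessarily $*$. Standard Prüfer theory shows that $(c_1,\ldots,c_{m-1})\mapsto$ super-tree is a bijection between $\{1,\ldots,m,*\}^{m-1}$ and the $(m+1)^{m-1}$ rooted trees on $\{1,\ldots,m,*\}$ with root $*$; moreover the removal order $i_1,\ldots,i_m$ is recoverable from $(c_1,\ldots,c_{m-1})$ by the usual rule (at step $j$, $i_j$ is the smallest element of $\{1,\ldots,m\}\setminus\{i_1,\ldots,i_{j-1}\}$ not appearing among $c_j,\ldots,c_{m-1}$). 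Define $\Phi(\tilde F)=(a_1,\ldots,a_{m-1},b)$ by $a_j=p_{i_j}$ and $b=p_{i_m}$. Since $a_j$ lies in the vertex set of super-vertex $c_j$ and $b$ lies in the vertex set of $*$, which is $S$, we have $\Phi(\tilde F)\in[k]^{m-1}\times S$.

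For the inverse, given $(a_1,\ldots,a_{m-1},b)\in[k]^{m-1}\times S$, define $c_j\in\{1,\ldots,m,*\}$ to be the unique super-vertex whose vertex set contains $a_j$ (well-defined since $V(T_1),\ldots,V(T_m),S$ partition $[k]$: $c_j=i$ iff $a_j\in V(T_i)$, and $c_j=*$ iff $a_j\in S$). Apply the inverse Prüfer procedure to $(c_1,\ldots,c_{m-1})$ to recover the super-tree together with the removal order $i_1,\ldots,i_m$, then declare $p_{i_j}=a_j$ for $1\le j\le m-1$ and $p_{i_m}=b$. This lifts the super-tree to a parent function on $\{r_1,\ldots,r_m\}$, yielding a forest completion.

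That $\Phi$ and this inverse are mutually inverse reduces to the standard Prüfer inversion, combined with the observation that each $c_j$ is read off directly from $a_j$. I expect the main obstacle to be bookkeeping: verifying that the inverse Prüfer procedure, run on the $c_j$'s extracted from $\Phi(\tilde F)$, reproduces the same removal order $i_1,\ldots,i_m$ produced by the forward procedure on the original super-tree. This however is the standard Prüfer argument, transported to the modified one-sink setting where $*$ is never removed; the count $(m+1)^{m-1}\cdot|S|\cdot k^{m-1}/(m+1)^{m-1}=k^{m-1}|S|$ from the Cayley-weighted side confirms the cardinalities match, which is reassuring.
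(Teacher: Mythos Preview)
Your argument is correct: contracting each $T_i$ to a super-vertex and all of $T_{m+1},\ldots,T_{m+n}$ to a single root $*$ reduces the problem to counting rooted labelled trees on $\{1,\ldots,m,*\}$ with prescribed weights on super-vertices, and your modified Pr\"ufer code (never removing $*$) handles this cleanly. The key point---that the super-code $(c_1,\ldots,c_{m-1})$ is recoverable from $(a_1,\ldots,a_{m-1})$ because $c_j$ is simply the block of the partition $V(T_1),\ldots,V(T_m),S$ containing $a_j$---is exactly what makes the lift bijective, and you have identified it. (Your final cardinality remark is a bit muddled as written, but it is only a sanity check and not load-bearing.)

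The paper, however, deliberately avoids Pr\"ufer. It instead introduces a new iterative procedure, the \emph{Forest Completion Algorithm}: process $r_1,\ldots,r_m$ in index order, maintaining a sequence $b=(b_1,\ldots,b_m)$ initialized to the input tuple; at stage $i$, attach $r_i$ to $b_i$ if that does not create a cycle, and otherwise attach $r_i$ to $b_m$ and swap $b_i\leftrightarrow b_m$. The invariant is that $b_m$ always lies in a component rooted at some $s_j$ (is ``safe''), so the swap option is always available. The output records not only the forest but also the accumulated permutation of swaps (the \emph{Forest Completion Permutation}), and it is precisely this permutation that is exploited in the later proofs of Theorems~\ref{nonmixedprop} and~\ref{nonemvert}, where the inverse FCA is run on rightmost forests of paired arrays to peel off non-mixed pairs. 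Your Pr\"ufer bijection proves Theorem~\ref{FCT} just as well, and is arguably more transparent as a counting argument, but its leaf-removal order does not interact as directly with the ``rightmost vertex in an unmarked cell'' structure that drives those subsequent constructions; the paper's greedy-with-swap algorithm was engineered for that purpose.
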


\begin{proof}
We describe such a mapping, which we call the ``Forest Completion Algorithm'' (FCA).
Consider the $m$-tuple $a=(a_1,\ld,a_{m})\in [k]^{m-1}\times S$. We construct the forest
corresponding to $a$ iteratively, in $m+1$ stages $0,1,\ld ,m$.
At every stage, we have a forest $G$ containing $F$ as a subforest,
a permutation $\pi$ of $[m]$, and a sequence $b=(b_1,\ld ,b_m)$ in $[k]^{m}$.
Initially, at stage $0$, we have $G=F$, $\pi$ is the identity permutation
and $b=a$.  Then, for $i=1,\ld ,m$:
\begin{itemize}
\item
if $b_i$ is in
a different component of $G$ from $r_i$,
then add an arc directed from $r_i$ to $b_i$ in $G$, and
leave $\pi$ and $b$ unchanged;
\item 
otherwise (so $b_i$ is in the same component
of $G$ as $r_i$), add an arc directed from $r_i$ to $b_m$ in $G$ (to obtain the new $G$),
switch $\pi(i)$ and $\pi(m)$ in $\pi$, and switch $b_i$ and  $b_m$ in $b$. 
\end{itemize}
The forest corresponding to the $m$-tuple $a$ is the terminating forest $G$. We call the terminating
permutation $\pi$ the ``Forest Completion Permutation'' (FCP).
The significance of the FCP is that it identifies precisely the arcs
that are added to $F$ -- they are $(r_i,a_{\pi(i)})$, $i=1,\ld ,m$. In our
examples throughout the paper, we shall specify the second line in the two
line representation of $\pi$ -- the list of images $(\pi(1),\ld ,\pi(m))$.

In Figure~\ref{foralgo} we give an example of the FCA
with $k=9$, $m=3$, $n=2$. The trees $T_1$, $T_2$, $T_3$, with $r_1=2$, $r_2=4$, $r_3=7$, are
given in the box at the top left; the trees $T_4$, $T_5$, with $s_1=6$, $s_2=8$, are
given in the box at the top right. Then, corresponding to the triple $a=(9,2,3)$, we
construct the forest at the bottom
of Figure~\ref{foralgo}.
The corresponding FCP is $(3,2,1)$.
\begin{figure}[ht]
\begin{center}
\scalebox{.6}{\includegraphics{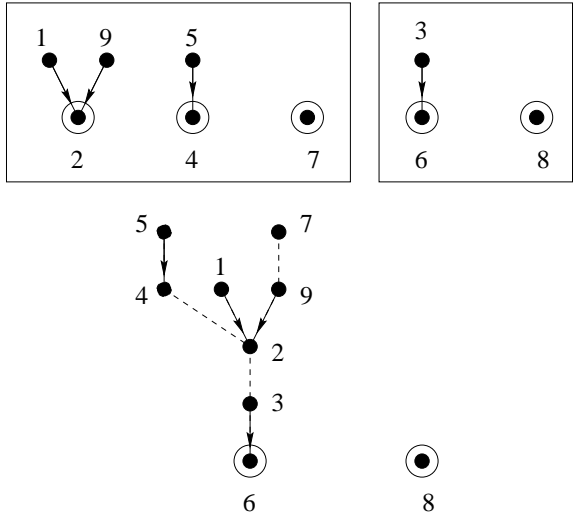}}
\end{center}
\caption{A rooted forest and subforest.}\label{foralgo}
\end{figure}

In analyzing this mapping, it is convenient to use the term ``safe'' to describe a vertex
in a component of a forest rooted at one of the vertices $s_1,\ld ,s_n$. Thus, initially,
$b_m$ is safe. It is trivial to prove by induction that, after
stage $i$, for $i=1,\ld ,m-1$, $G$ is a forest with root vertices $r_{i+1},\ld ,r_m,s_1,\ld ,s_n$,
and that $b_m$ is safe for $G$ (which implies that $r_{i+1},\ld ,r_m$ are in different components
of $G$ from $b_m$). Thus, at stage $m$, $b_m$ is indeed in a different component 
of $G$ from $r_m$, so we successfully add the final arc from $r_m$ to $b_m$,
to obtain a terminating forest $G$ rooted at $s_1,\ld ,s_n$ (this explains our use of ``safe'' -- for
extending our forest, it is always safe to add the new arc directed to $b_m$).
This proves that the FCA does indeed produce a rooted forest on vertex-set $[k]$ with
root vertices $s_1,\ld ,s_n$ that contains $F$ as a subforest.

To prove that the FCA is a bijection, we describe its inverse. Suppose that we are
given a rooted forest $H$ on vertex-set $[k]$, with root vertices $s_1,\ld ,s_n$, and that
we wish to remove the arcs directed from  $r_i$ to $c_i$, $i=1,\ld ,m$, where $r_1,\ld ,r_m$ are
distinct non-root vertices, with the convention that $r_1<\cdots <r_m$.
We proceed iteratively, through stages $m,\ld,1,0$.
At every stage we have a subforest $G$ of $H$, a permutation $\si$ of $[m]$, and a
sequence $b=(b_1,\ld ,b_m)$ in $[k]^m$. Initially, at stage $m$, we
have $G=H$, $\si$ is the identity permutation,
and $b=c$. Then, for $i=m-1,\ld ,0$, remove the arc
from $r_{i+1}$ to $b_{i+1}$ in $G$ (to get the new $G$), and:
\begin{itemize}
\item
if $b_m$ is safe for (the new) $G$, leave $\si$ and $b$ unchanged;
\item
otherwise (so $b_m$ is not safe for $G$), switch $\si_{i+1}$ and $\si_m$ in $\si$,
and switch $b_{i+1}$ and $b_m$ in $b$.
\end{itemize}
We claim that the $m$-tuple corresponding to the forest $H$ is the terminating $m$-tuple $b$,
so that this mapping uniquely reverses the FCA. In fact, it is easy to establish
that this mapping uniquely reverses the FCA stage by stage,
since it is trivial to prove by induction that the values
of $b$ and $G$ after stage $i$ of the above mapping are exactly the same as $b$ and $G$ after
stage $i$ of the FCA. It is also easy to prove that the FCP is given by $\si^{-1}$ for the
terminating $\si$.
 
The result follows, since the FCA is a bijection between the required sets.
\end{proof}

Of course, it is an immediate enumerative consequence of Theorem~\ref{FCT} that there
are $k^{m-1}|S|$ rooted forests on
vertex-set $[k]$ with root vertices $s_1,\ldots ,s_n$ that contain $F$ as a subforest.
In the special case $m+n=k$ (so that $F$ has no edges) this gives
the classical result that there are $k^{k-n-1}n$ rooted forests on vertex-set $[k]$,
with a prescribed set of $n$ root vertices (see, \textit{e.g.},~[\ref{stan}], p. 25).

\section{Removing non-mixed pairs and vertical paired arrays}\label{sec4}

A {\em vertical} paired array is a paired array in which all
pairs are mixed.
We define $\mcV^{(s)}_{k,i,j}$ to be the set of vertical paired arrays
in $\mcP\!\mcA^{(s)}_{s,s,k}$, in which there are $i+1$ marked cells
in row $1$ and $j+1$ marked cells in row $2$, for $i,j\ge 0$,
and let~$v^{(s)}_{k,i,j}=|\mcV^{(s)}_{k,i,j}|$.

In our next result,
we remove non-mixed pairs from a minimal, canonical paired array,
and thus show that
the enumeration of minimal, canonical paired arrays can be reduced to the
enumeration of vertical paired arrays.
We use the following notation. For a finite set $X$, let $\mcL_{X,i}$ denote
the set of $i$-tuples consisting of $i$ distinct elements of $X$.
Thus $\vert \mcL_{X,i}\vert=(x)_i$, where $x=|X|$ and $(x)_i$ is
the \textit{falling factorial}: for positive integers $i$, $(x)_i=x(x-1)\cd (x-i+1)$;
for $i=0$, $(x)_i=1$; otherwise $(x)_i=0$.

\begin{theorem}\label{nonmixedprop}
    For $p,q,s\ge 1$ of the same odd-even parity, let $i=\tfh (p-s)$ and $j=\tfh (q-s)$. Then
    \begin{equation*}
        m_{p,q,k}^{(s)} = \left(p\right)_i
            \left(q\right)_j
            v_{k,i,j}^{(s)}.
    \end{equation*}
\end{theorem}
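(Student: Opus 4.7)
The plan is to construct a bijection
\begin{equation*}
\xi \,:\, \mcM_{p,q,k}^{(s)} \,\longrightarrow\, \mcL_{[p],i}\times\mcL_{[q]',j}\times\mcV_{k,i,j}^{(s)},
\end{equation*}
from which the claimed identity follows on taking cardinalities, since $|\mcL_{X,r}|=(|X|)_r$. Row $1$ and row $2$ are handled symmetrically, so I describe only the row-$1$ construction. Label the $p$ row-$1$ vertices of $\al\in\mcM_{p,q,k}^{(s)}$ as $1,\ld,p$ from left to right across the columns. The essential preliminary observation is that each of the $i$ row-$1$ non-mixed pairs has \emph{exactly one} endpoint which is the rightmost vertex of an unmarked cell: at least one by minimality, and at most one because two such endpoints would force $\psi$ to contain a $2$-cycle, contradicting the tree condition. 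Order these pairs by the column $c_{u_r}$ of their rightmost endpoint $u_r$ (so that $c_{u_1}<\cd<c_{u_i}$); let $v_r$ denote the non-rightmost endpoint of the $r$th pair, and $a_r$ its label. The row-$1$ tuple is $(a_1,\ld,a_i)\in\mcL_{[p],i}$.

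The vertical paired array $V$ is obtained by deleting all $2i$ row-$1$ non-mixed vertices (and the analogous row-$2$ non-mixed vertices) together with the associated edges, and then placing a new marker on each column $c_{u_r}$ (and on the analogous columns in row $2$). A routine verification shows $V\in\mcV_{k,i,j}^{(s)}$: mixed pairs, and hence balance, are untouched; each emptied column is now marked, so the nonempty condition holds; and the $i$ deleted arcs $(c_{u_r},\psi_\al(c_{u_r}))$ split the row-$1$ rightmost tree of $\al$ into exactly $i+1$ components rooted at the $i+1$ row-$1$ markers of $V$. To invert $\xi$, given $V$ and $(a_1,\ld,a_i)\in\mcL_{[p],i}$ (and its row-$2$ counterpart), one must identify the original marker $a$ among the $i+1$ row-$1$ markers of $V$, order the remaining markers by column as $c_{u_1}<\cd<c_{u_i}$, assign each $c_{u_r}$ a partner column $c_{v_r}$, and then insert $u_r$ as new rightmost of $c_{u_r}$ and $v_r$ non-rightmost of $c_{v_r}$ at the unique intra-column position making $v_r$ the $a_r$th row-$1$ vertex in the final left-to-right labeling. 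The joint choice of $a$ and the partner assignment must produce a single-rooted tree containing the row-$1$ rightmost forest of $V$, which is precisely the problem solved by the Forest Completion Algorithm: Theorem~\ref{FCT}, applied with $m=i$ and $n=1$, provides a bijection between completions and combinatorial tuples, and the labels $(a_r)$ translate via column containment into exactly this data, so that $a$ and the arcs $c_{u_r}\to c_{v_r}$ are forced simultaneously.

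The principal technical challenge is to verify that every tuple in $\mcL_{[p],i}$ yields a minimal paired array, so that $\xi$ is a well-defined bijection rather than merely an injection into a larger set. Non-redundancy of each inserted non-mixed pair is automatic because the pair contains the rightmost endpoint $u_r$ of an unmarked cell $c_{u_r}$ by construction; the tree condition is delivered directly by Theorem~\ref{FCT}; and balance and nonempty are immediate from the insertion rules. The hard part will be the enumerative matching between the $(p)_i$ tuples in $\mcL_{[p],i}$ and the summed contributions of Theorem~\ref{FCT} across the $i+1$ candidate roots, together with the positional freedom in placing the $v_r$'s; I expect this to be verified by induction on $i$, paralleling the inductive structure of the Forest Completion Algorithm itself.
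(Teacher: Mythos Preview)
Your overall plan is the paper's: construct a bijection $\mcM_{p,q,k}^{(s)}\to\mcL_{[p],i}\times\mcL_{[q]',j}\times\mcV_{k,i,j}^{(s)}$ by deleting the non-mixed pairs, adding $i$ new row-$1$ markers, and letting the Forest Completion Algorithm mediate between the rightmost tree of $\al$ and the rightmost forest of $V$. But two steps in your execution do not go through as written.

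First, the assertion that the ``other'' endpoint $v_r$ is non-rightmost is false: $v_r$ can be the rightmost vertex of the unique \emph{marked} cell. Take $p=3$, $s=1$, $k=2$, with row~$1$ consisting of $L,R$ in marked column~$1$ and $C$ in column~$2$, the mixed pair on $L$, and the non-mixed pair $\{R,C\}$; then $u_1=C$ but $v_1=R$ is rightmost in its cell. This breaks your insertion rule for the inverse (which forbids $v_r$ to be rightmost) and, more basically, exposes that ``the unique intra-column position making $v_r$ the $a_r$th vertex'' is ill-defined: a rank in $[p]$ does not pin down a column when two cells abut. The paper resolves both issues by numbering the $p$ vertices \emph{together with the small box} as $1,\ld,p+1$; the box is then the rightmost numbered object in the marked cell, every vertex is non-rightmost among numbered objects, and the rule ``place the new vertex in the cell of the least-numbered object exceeding it'' makes insertion unambiguous.

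Second, your inverse does not determine which of the $i+1$ markers is the original root $m$. Theorem~\ref{FCT} gives a bijection between $[k]^{i-1}\times S$ and completions of the forest to a tree, where $S$ is the vertex set of the \emph{already chosen} target component; it does not select $m$ for you. Your tuple $(a_1,\ld,a_i)$, ordered by the columns $c_{u_r}$, carries no entry guaranteed to land in the $m$-component, so the phrase ``$a$ and the arcs are forced simultaneously'' hides a genuine ambiguity. The paper handles this by running the \emph{inverse} FCA already in the forward direction: it records the Forest Completion Permutation $\rho$ and exports $\kappa_1=(v_{\rho^{-1}(1)},\ld,v_{\rho^{-1}(i)})$, whose last entry is guaranteed by the FCA mechanism to lie in the component rooted at $m$. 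That single reshuffle is what turns the inverse into a clean algorithm rather than the inductive ``enumerative matching'' you anticipate having to carry out.
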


\begin{proof}
Note that every element of $\mcM^{(s)}_{p,q,k}$ has exactly $i$ and $j$ non-mixed
pairs in the top and bottom rows, respectively. 
Taking an arbitrary $\al\in\mcM^{(s)}_{p,q,k}$,
we now describe a mapping that is initially identical to that used in the
proof of Theorem~\ref{redundmin}. We attach the numbers $1,\ld ,p+1$ to
the vertices and the small box in row $1$, using the same left to
right convention as in the proof of Theorem~\ref{redundmin}.
Let the pairs of numbers attached to the non-mixed pairs
in row $1$ be denoted by $(u_1,v_1),\ld ,(u_i,v_i)$, where $u_1,\ld ,u_i$ are
attached to the rightmost vertices in these
pairs (with $u_1<\cd <u_i$), and $v_1,\ld ,v_i$ are
attached to the other (not rightmost in their cell) vertices in these pairs. 

Suppose that the marked cell in row $1$ is in column $m$, and 
that the rightmost tree for row $1$ of $\al$ is $T$, so $T$ is rooted at vertex $m$. 
Now run the inverse of the FCA on $T$, to remove the arcs directed
from $\msfc(u_{\ell})$ to $\msfc(v_{\ell})$, $\ell =1,\ld ,i$, (here, $\msfc(\ell )$ denotes
the column in which the number $\ell$ appears) and let $\rho$ be the corresponding FCP.
Let $\ka_1=(v_{\rho^{-1}(1)},\ld ,v_{\rho^{-1}(i)})$.

We follow the analogous procedure
for row $2$: we attach primed numbers $1',\ldots,(q+1)'$ to the vertices and
small box in row $2$. Let the pairs of numbers attached to the non-mixed pairs
in row $1$ be denoted by $(x'_1,y'_1),\ld ,(x'_j,y'_j)$, where $x'_1,\ld ,x'_j$ are
attached to the rightmost vertices in these
pairs (with $x_1<\cd <x_j$), and $y'_1,\ld ,y'_j$ are
attached to the other (not rightmost in their cell) vertices in these pairs.

Suppose that the marked cell in row $2$ is in column $n$, and
that the rightmost tree for row $2$ of $\al$ is $T'$, so $T'$ is rooted at vertex $n$.
Now run the inverse of the FCA on $T'$, to remove the arcs directed
from $\msfc(x'_{\ell})$ to $\msfc(y'_{\ell})$, $\ell =1,\ld ,i$,
and let $\tau$ be the corresponding FCP.
Let $\ka_2=(y'_{\tau^{-1}(1)},\ld ,y'_{\tau^{-1}(j)})$.

Finally, we mark the cells in row $1$ containing $v_1,\ld ,v_i$ (in addition
to the existing marked cell in column $m$), and the cells in
row $2$  containing $y'_1,\ld ,y'_j$ (in addition to the existing
marked cell in column $n$), and then 
remove all non-mixed pairs (both vertices and edges) from $\al$,
to get the vertical paired array $\be$.
The vertices in each cell of $\be$ have the same relative order as they did
in $\al$.

\begin{figure}[ht]
\begin{center}
\scalebox{.6}{\includegraphics{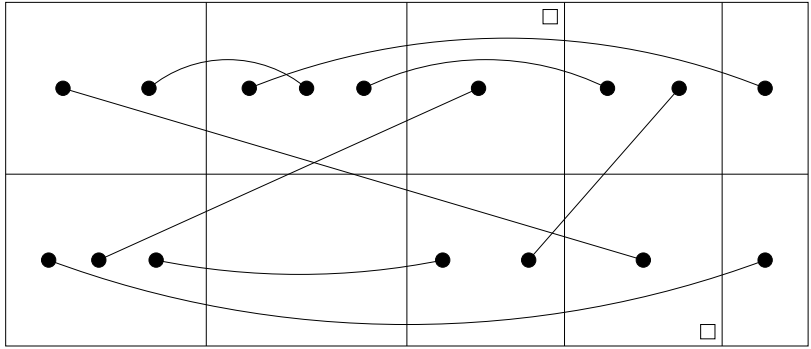}}
\end{center}
\caption{A minimal paired array.}\label{minarr}
\end{figure}

\begin{figure}[ht]
\begin{center}
\scalebox{.6}{\includegraphics{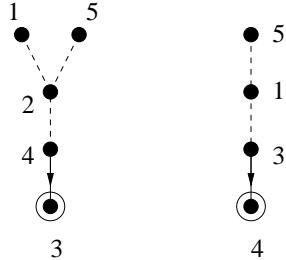}}
\end{center}
\caption{The rightmost trees for Figure~\ref{minarr}.}\label{newtrees}
\end{figure}

\begin{figure}[ht]
\begin{center}
\scalebox{.6}{\includegraphics{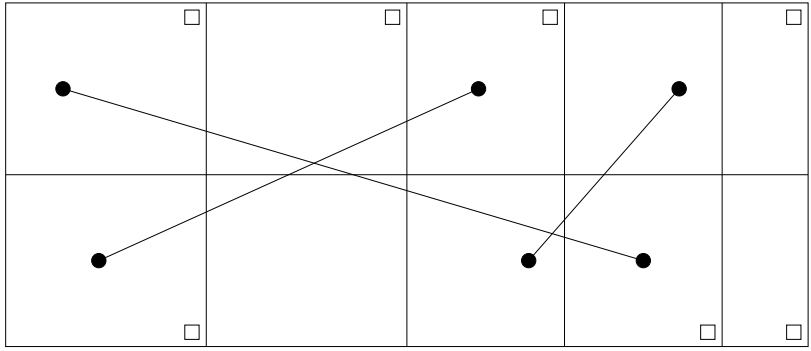}}
\end{center}
\caption{The vertical paired array for Figure~\ref{minarr}.}\label{vertarr}
\end{figure}

For example, suppose that $\al$ is the paired
array in Figure~\ref{minarr}. Then  we have $i=3$, with $u_1=2$, $u_2=5$, 
$u_3=10$, $v_1=4$, $v_2=8$, $v_3=3$, and $j=2$, with $x'_1=3'$, $x_2=8'$,
$y'_1=4'$, $y'_2=1'$. We have $m=3$, $n=4$, and the trees $T$ and $T'$ are
given in Figure~\ref{newtrees}. When we run the inverse of the FCA on $T$ to
remove the arcs $(1,2)$, $(2,4)$ and $(5,2)$ (for which we use dashed lines in
Figure~\ref{newtrees}), we obtain $(1,3,2)$ as
the FCP,  which gives $\ka_1=(4,3,8)$. When we run the inverse of
the FCA on $T'$ to remove the arcs $(1,3)$ and $(5,1)$ (for which we use dashed lines in
Figure~\ref{newtrees}),
we obtain $(2,1)$ as the FCP,  which gives $\ka_2=(1',4')$. The
vertical paired array $\be$ in this example is given in Figure~\ref{vertarr}.

Now, for the same reasons as in the proof of Theorem~\ref{redundmin}, 
we have $1\le v_{\ell}\le p$ for $\ell=1,\ld ,i$, so $\ka_1\in\mcL_{[p],i}$,
and similarly, $\ka_2\in\mcL_{[q]',j}$. Thus we have a mapping
\begin{equation*}
\zeta:\mcM^{(s)}_{p,q,k}\rightarrow
\mcL_{[p],i}\times\mcL_{[q]',j}\times\mcV^{(s)}_{k,i,j}:
\al\mapsto (\ka_1,\ka_2,\be).
\end{equation*}

In fact, $\zeta$ is a bijection. Before proving this, for $\zeta(\al)=(\ka_1,\ka_2,\be)$,
we first note the key dependencies between  $\al$ and $(\ka_1,\ka_2,\be)$ that
follow from the FCA: 
let $F$ and $F'$ denote the rightmost forests of $\be$ for rows $1$ and $2$, respectively.
Then, in $\al$ with numbers attached as in the construction above, the column containing
the vertex whose number is the last entry of $\ka_1$ is contained in the component
of $F$ rooted at $m$. Similarly, the column containing
the vertex whose number is the last entry of $\ka_2$ is contained in the component
of $F'$ rooted at $n$. For example, for $\al,\ka_1,\ka_2,\be$ given in
Figures~\ref{minarr}--\ref{vertarr}, the last entries of $\ka_1$ and $\ka_2$ are $8$ and $4'$,
respectively, corresponding to vertices of $\al$ in columns $4$ and $3$, respectively. 
Now, the forests $F$ and $F'$ in this case are obtained from the trees $T$ and $T'$, respectively,
by removing the dashed edges in Figure~\ref{newtrees}. Then, indeed, vertex $4$ is
contained in the component of $F$ rooted at $m=3$, and vertex $3$ is contained
in the component of $F'$ rooted at $n=4$.

We now prove that $\zeta$ is a bijection
by describing the inverse mapping,
so that we can uniquely recover $\al$ from an arbitrary
triple $(\ka_1,\ka_2,\be)\in\mcL_{[p],i}\times\mcL_{[q]',j}\times\mcV^{(s)}_{k,i,j}$.
Let $F$ denote the rightmost forest of $\be$ for row $1$.
Let $\{\ka_1\}$ denote the set consisting of the entries
in $\ka_1$, and let $p=s+2i$, $\de_1=[p+1]\setminus\{\ka_1\}$.
Next (as a generalization of the procedure for the inverse
of $\xi$ described in the proof of Theorem~\ref{redundmin}), we number
the vertices and small boxes in row $1$ of $\be$ with the elements of $\de_1$.
Then insert vertices numbered with the elements of $\{\ka_1\}$, so that
the numbers on all vertices and small boxes increase from left to right (with
any small box regarded as the rightmost object in its cell) and so that
the the vertex numbered with $l\in\{\ka_1\}$ is placed in the same
cell as the object numbered with $\min\{t\in\de_1: t>l\}$.  
Now suppose that $\ka_1=(w_1,\ld ,w_i)$, and use the key dependency noted above: 
let the column containing
the vertex numbered $w_i$ be contained in the component of $F$ rooted
at vertex $m$. Let $u_1<\cd<u_i$ denote the numbers attached to the
small boxes that are not in column $m$ (the columns containing $u_1,\ld,u_i$ are
the root vertices for the components of $F$ not rooted at $m$). Now, apply
the FCA on $i$-tuple $(\msfc(w_1),\ld ,\msfc(w_i))$, to give
the tree $T$ rooted at $m=\msfc(w_i)$, that contains the forest $F$ as a
subforest. Let $\rho$ be the FCP. Finally, replace the small
boxes numbered $u_1,\ld,u_i$ by rightmost vertices (in the same cells)
numbered $u_1,\ld,u_i$, pair the vertex numbered $u_{\ell}$ with the
vertex numbered $w_{\rho(\ell)}$, $\ell=1,\ld ,i$, and remove the
numbers from row $1$. Repeat the analogous process for row $2$, and
we arrive at a minimal paired array $\al$. It is straightforward to
check that the process described above reverses $\zeta$. Thus, we
have described $\zeta^{-1}$, and our proof
that $\zeta$ is a bijection is complete.
The result follows immediately.
\end{proof}

\section{Enumeration of vertical paired arrays}\label{sec5}

For every column of a vertical paired array, the
cells in rows $1$ and $2$ have the same number of vertices, because
of the balance condition. A \textit{full}, vertical paired array is
a vertical paired array with a positive number of vertices in every
column. Let $\mcF^{(s)}_{k,i,j}$ be the set of full, vertical paired arrays
in $\mcV^{(s)}_{k,i,j}$, and $f^{(s)}_{k,i,j}=\vert\mcF^{(s)}_{k,i,j}\vert$.
In Theorem~\ref{nonemvert} we shall give an explicit construction for
the elements of $\mcF^{(s)}_{k,i,j}$, and thus obtain an explicit
formula for $f^{(s)}_{k,i,j}$. 
To help in the proof of this result, we first introduce some
terminology and notation associated with an arbitrary $\al\in\mcF^{(s)}_{k,i,j}$.
A vertex is said to be \textit{dependent} if it is paired with the rightmost vertex
of an unmarked cell (in the other row).
If the rightmost vertex of an unmarked cell in row $1$, column $i$ is paired
with the rightmost vertex of an unmarked cell in row $2$, column $j$,  then we
call this a \textit{shared pair} of $\al$. In this case, the rightmost
forest $F$ for row $1$ of $\al$ contains arc $(i,j)$ and the  rightmost
forest $F'$ for row $2$ of $\al$ contains arc $(j,i)$, and we call each
of these a \textit{shared arc}. 

Now, canonically number the vertices in row $1$ of $\al$ $1,\ld ,s$, from left to right, and
number the vertices in row $2$ of $\al$ $1',\ld ,s'$, from left to right.
Let $E$ be the subforest of $F$ with only the shared arcs of $F$.
Suppose
that $F$ has $n\ge 1$ non-shared arcs, corresponding to
pairs $(x_1,y_1'),\ld,(x_n,y_n')$, where $x_1<\cd <x_n$, and $x_1,\ld ,x_n$ are
rightmost vertices in their (unmarked) cells.
Run the inverse of the FCA on the forest $F$, to
obtain the subforest $E$ by removing
the non-shared arcs directed from $\msfc(x_{\ell})$ to $\msfc(y_{\ell}')$, $\ell=1,\ld ,n$,
and let $\tau$ be the corresponding FCP. Define $a'=y'_{\tau^{-1}(n)}$.
If all arcs of $F$ are shared, then let $a'$ be the vertex in row $2$ that
is paired with the rightmost non-dependent vertex in row $1$ (we call this
the \textit{non-FCA option}). In both cases,
define $A=\msfc(a')$, and
let $\rho_0=A, \rho_1,\ld ,\rho_l$ be the vertices on the unique directed path in $E$ from vertex $A$ to
the root vertex ($\rho_l$) of the component of $E$ containing $A$. Thus $l\ge 0$, and
the cell in row $2$, column $\rho_0$ is marked, and the cell in row $2$, column $\rho_{\ell}$ is
not marked, $\ell=1,\ld ,l$. Also, the cell in row $1$, column $\rho_{\ell}$ is
not marked, $\ell=0,\ld ,l-1$.  Now define $E'$ to be the subforest of $F'$ containing
the arcs $(\rho_{\ell},\rho_{\ell+1})$, $\ell=0,\ld ,l-1$. Suppose that $F'$ has $m$ arcs
that are not in $E'$,  corresponding to
pairs $(w_1',z_1),\ld,(w_m',z_m)$, where $w_1<\cd <w_m$, and $w_1',\ld ,w_m'$ are
rightmost vertices in their (unmarked) cells. 
Run the inverse of the FCA on the forest $F'$, to
obtain the subforest $E'$ by removing
the arcs directed from $\msfc(w_{\ell}')$ to $\msfc(z_{\ell})$, $\ell=1,\ld ,m$,
and let $\ka$ be the corresponding FCP. Define $b=z_{\ka^{-1}(m)}$.
If $F'=E'$, let $b$ be the vertex in row $1$ that is paired with the rightmost
non-dependent vertex in row $2$ (again, we call this the \textit{non-FCA option}).
Let $\rho=(\rho_0,\rho_1,\ld ,\rho_l)$, which we call the \textit{tail} of $\al$.
The tail length is $l$.
We say that $b$ is \textit{in the tail} when the column containing vertex $b$ is 
one of $\rho_0,\rho_1,\ld ,\rho_l$. The \textit{type} of $\al$ is given
by $(l,\rho,a',b)$. If the cells in rows $1$ and $2$ of column $\ell$ in $\al$ have
 $\la_{\ell}$ vertices, $\ell=1,\ld ,k$, then we say that $\al$ has
\textit{shape} $\la=(\la_1,\ld ,\la_k)$. Note that $\la_1+\cd +\la_k=s$, and
that $\la_{\ell}$ is positive for
all $\ell=1,\ld ,k$, so $\la$ is a \textit{composition} of $s$ with $k$ parts.

\begin{figure}[ht]
\begin{center}
\scalebox{.6}{\includegraphics{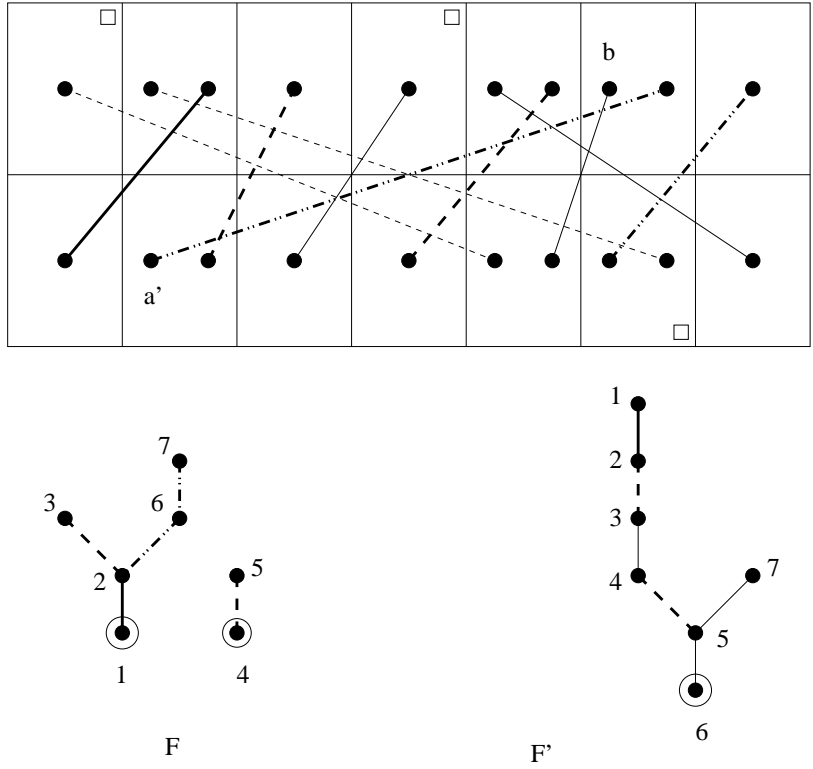}}
\end{center}
\caption{A vertical paired array with rightmost forests.}\label{verteg}
\end{figure}

For example, suppose that $\al$ is the full, vertical paired array 
given at the top of Figure~\ref{verteg}, with $s=10$, $k=7$, $i=1$, $j=0$,
shape $(1,2,1,1,2,2,1)$, and rightmost forests $F$ and $F'$ given
at the bottom of Figure~\ref{verteg}. The lines joining the pairs in $\al$ are
of various types (and the same type of line is used in the rightmost forests
when the pair corresponds to an edge in one or other of these forests): a thick
solid line indicates a shared pair in the tail, a thick dashed line a shared
pair not in the tail, a thick dashed and dotted line a pair contributing to $F$ only,
a thin solid line a pair contributing to $F'$ only, and a thin dashed line a pair that
contributes to neither of $F,F'$. When we run the inverse of the FCA to remove the
arcs $(6,2)$ and $(7,6)$ from $F$, we obtain the ordered pair $(6,2)$, and
hence obtain $a'=2'$ as indicated in Figure~\ref{verteg},
contained in column $2$.  Thus the tail is $\rho=(2,1)$, of length $l=1$.
When we run the inverse of the FCA to remove the
arcs $(2,3)$, $(3,4)$, $(4,5)$, $(5,6)$ and $(7,5)$ from $F'$, we
obtain the $5$-tuple $(3,4,5,5,6)$, and hence obtain $b=8$ as 
indicated in Figure~\ref{verteg}, contained in column $6$. Thus we conclude
that $\al$ has type $(1,(2,1), 2', 8)$.

\begin{theorem}\label{nonemvert}
For $i,j\ge 0$, $k,s\ge 1$, we have
\begin{equation*}
f^{(s)}_{k,i,j}=
s! \sum_{l=0}^{k-1} \binom{s-1-l } {k-1-l} \binom{k-1-l}{i} \binom{k-1-l}{j}.
\end{equation*}
\end{theorem}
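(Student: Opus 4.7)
The plan is to classify full, vertical paired arrays in $\mcF^{(s)}_{k,i,j}$ by their tail length $l\in\{0,\ld ,k-1\}$ and, for each fixed $l$, to construct an explicit bijection with a product of four sets of sizes $s!$, $\binom{s-1-l}{k-1-l}$, $\binom{k-1-l}{i}$, and $\binom{k-1-l}{j}$. Summing over $l$ then yields the formula. After canonically labelling the vertices $1,\ld ,s$ in row~$1$ and $1',\ld ,s'$ in row~$2$ from left to right, such an array is specified by its shape $\la$ (a composition of $s$ into $k$ positive parts), its pairing (an arbitrary bijection between $[s]$ and $[s]'$, accounting for the $s!$ factor), and two mark subsets $R,R'\subseteq[k]$ of sizes $i+1$ and $j+1$, all subject to the forest condition on the rightmost forests $F,F'$.

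For fixed tail length $l$, the tail $\rho=(\rho_0,\ld ,\rho_l)$ occupies $l+1$ columns. In row~$2$ exactly one tail-column (namely $\rho_0$) is marked, leaving $j$ marks to distribute freely among the $k-l-1$ non-tail columns, which gives the factor $\binom{k-1-l}{j}$; in row~$1$ the column $\rho_l$ is forced to be marked, since it is the root in $E\subseteq F$ of the component containing $A=\msfc(a')$, leaving $i$ marks to distribute freely among the non-tail columns and yielding $\binom{k-1-l}{i}$. I would then apply the inverse FCA of Theorem~\ref{FCT} separately to $F$ and to $F'$, to encode the non-shared arcs of $F$ and the non-tail arcs of $F'$ as tuples, leaving the shared subforest $E$ and the tail-path $E'$ as the remaining forest data. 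The residual shape information, after the tail and the two FCA tuples are extracted, should correspond to a composition of $s-l$ into $k-l$ positive parts, yielding the remaining factor $\binom{s-l-1}{k-l-1}=\binom{s-1-l}{k-1-l}$.

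The main obstacle will be pinning down this residual shape factor: making precise how the tail and the FCA output ``use up'' exactly $l$ units of shape so that only $k-l$ parts summing to $s-l$ remain free, and writing a clean inverse that, given an input bijection, a composition of $s-l$ into $k-l$ positive parts, and the two non-tail mark subsets, reconstructs a unique full vertical paired array of tail length exactly $l$ whose rightmost forests satisfy the forest condition and whose extracted data recovers the input. This requires careful tracking of how the pairing determines the rightmost functions $\psi$ and $\psi'$, and how the FCA tuples, together with the residual composition and the tail-path, reassemble to give a valid pair of forests $F,F'$ with the prescribed root sets $R,R'$; the non-FCA option in the definitions of $a'$ and $b$ (which distinguishes the extremal cases $E=F$ and $E'=F'$) will need to be handled separately in the inverse. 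Once the bijection is verified in both directions, the count for each tail length follows, and summing over $l\in\{0,\ld ,k-1\}$ completes the proof.
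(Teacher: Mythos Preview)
Your overall strategy --- classify full vertical paired arrays by tail length $l$ and use the Forest Completion Algorithm to encode the rightmost forests --- is exactly the paper's, and your target factorization is the right one. But the clean four-factor bijection you are hoping for does not exist in the form you describe, and this is the genuine gap.

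Two specific claims fail. First, it is \emph{not} true that among the tail columns exactly $\rho_0$ is marked in row~$2$ and the remaining $j$ marks are free among the $k-1-l$ non-tail columns. Whether $\rho_0$ is forced in row~$2$ depends on whether $a'$ is rightmost in its cell, and whether there is an additional forced row-$2$ mark outside the tail depends on whether $b$ lies in the tail. The paper handles this by a three-case split (on ``$a'$ rightmost?'' and ``$b$ in the tail?''); in one case the row-$2$ contribution is $\binom{k-l-2}{j-1}$, not $\binom{k-l-1}{j}$, and only after adding the three cases does the uniform $\binom{k-l-1}{j}$ emerge. Second, the $s!$ factor is not ``the pairing chosen freely'': the forest condition is a genuine constraint on the pairing once shape and marks are fixed. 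In the paper the factor $s!\binom{s-1-l}{k-1-l}$ only appears \emph{after} one fixes the shape $\la$, obtains a per-shape count of $s(k-1)_l(s-l-1)!\binom{k-1-l}{i}\binom{k-1-l}{j}$ by summing the three cases, and then sums over the $\binom{s-1}{k-1}$ compositions $\la$. Your ``residual shape'' interpretation of $\binom{s-1-l}{k-1-l}$ therefore has no direct bijective meaning; it is an algebraic byproduct of that double sum.

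So your plan is the right skeleton, but to carry it through you will need to replace the hoped-for single bijection by the case analysis: for fixed $\la$ and $l$, separately count arrays according to whether $a'$ is rightmost and whether $b$ is in the tail (three cases), use the FCA twice in each case to count the ways to complete the pairing, add the cases to get the per-shape count above, and only then sum over $\la$. The non-FCA options you flagged are absorbed into these cases rather than handled separately.
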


\begin{proof}
Each paired array in $\mcF^{(s)}_{k,i,j}$ has a unique type and shape, and we
can uniquely construct those of given type and shape as follows. For the given shape, we
begin with a $2$ by $k$ array, with each cell containing an ordered set
of vertices of prescribed size.  Then we pair these vertices (all are mixed pairs)
in all possible ways for the given type in four stages. First, we
pair the rightmost vertices as prescribed by the tail. Second, we use the FCA
to pair the rightmost vertices in row $2$ (note that $b$ is safe, by construction).
Third, we use the FCA to pair the unpaired rightmost vertices in row $1$ (note
that $a'$ is safe, by construction). Fourth, we pair the remaining vertices
arbitrarily. (In addition, along the way, we have to choose the marked cells in a
consistent fashion.) In this way, for each composition $\la$ of $s$ with $k$ parts
and $l\ge 0$, we enumerate elements of $\mcF^{(s)}_{k,i,j}$ with
shape $\la$ and tail of length $l$. There are three cases:
\vspace{.1in}

\noindent
{\bf\underline{Case 1} (Vertex $a'$ is not rightmost in its cell):}
There are $s-k$ choices for $a'$, which then fixes $\rho_0$. There are
then $(k-1)_{l}$ choices for $\rho_1,\ld ,\rho_l$, and then $l$ of the pairs
are determined. This leaves $s-l$ choices for $b$ (any vertex not yet paired in
row $1$). Now mark the cell in row $1$, column $\rho_l$. Also, if $b$ is in the
tail, mark the cell in row $2$, column $\rho_0$, or if $b$ is not
in the tail, mark the cell in row $2$ of the column that contains $b$ ($b$ is
in row $1$).
        Choose, from the $k-l-1$ cells not in the tail or already marked,
        $i$ cells to mark in the top row, and $j$ cells to mark in the bottom row.
Now pair the unpaired rightmost vertices of unmarked cells in row $2$ with vertices
in row $1$ to satisfy the forest condition, using vertex $b$ as the safe position.
There are $(s-l-1)_{k-j-l-2}$ possible choices for this, from the FCA.
Suppose that there are $n$ unmarked cells in row $1$ whose rightmost vertices
are not yet paired. Then pair these with vertices in row $2$ to satisfy the
forest condition, using vertex $a'$ as the safe position. There
are $(s-k+j)_{n-1}$ possible choices for this, from the FCA. Finally,
there are $(s-k+j-n+1)!$ ways to pair the remaining vertices, arbitrarily.
But $(s-k+j)_{n-1}\cdot (s-k+j-n+1)!=(s-k+j)!$, and we conclude that
the number of elements in $\mcF^{(s)}_{k,i,j}$ with
shape $\la$ and tail of length $l$ in this case is
        \begin{equation}\label{case1claim2}
            (s-k)(k-1)_l(s-l) (s-l-1)_{k-j-l-2} (s-k+j)! \binom{k-l-1}{i} \binom{k-l-1}{j}.
        \end{equation}
(It is straightforward the check that these cardinalities are correct when we use
the non-FCA options also.)
\vspace{.1in}

\noindent
{\bf \underline{Case 2} (Vertex $a'$ is rightmost in its cell, and $b$ is in the tail):}
The number of choices for $b$ and $\rho$ is  $s\binom{k-1}{l} (l+1)! -l(k)_{l+1}$.
Then  $a'$, and $l$ of the pairs, are uniquely determined. The cell in
row $1$, column $\rho_l$ is marked. The cell
in row $2$, column $\rho_0$ is marked for two reasons -- because $b$ is in the tail,
and because the cell contains $a'$, rightmost, so
        that $a'$ will be paired with a non-dependent vertex in row $1$.
The rest of the enumeration proceeds as in Case 1, and we conclude that
the number of elements in $\mcF^{(s)}_{k,i,j}$ with
shape $\la$ and tail of length $l$ in this case is
        \begin{equation}\label{case2claim2}
            \left( s \binom{k-1}{l} (l+1)! - l(k)_{l+1} \right) (s-l-1)_{k-j-l-2} (s-k+j)! 
                \binom{k-l-1}{i} \binom{k-l-1}{j}.
        \end{equation}

\noindent
{\bf \underline{Case 3} (Vertex $a'$ is rightmost in its cell, and $b$ is not in the tail):}
The number of choices for $b$ and $\rho$ is $s(k-1)_{l+1}$, and then $a'$ together with $l$ of
the pairs are uniquely determined. In this case three different cells
must now be marked:  in
row $1$, column $\rho_l$; in row $2$, column $\rho_l$; in row $2$ of the
column that contains $b$. There are then $\binom{k-l-1}{i} \binom{k-l-2}{j-1}$ ways to
choose which other cells are marked, and the rest of the enumeration
proceeds as in Cases 1 and 2. We conclude that
the number of elements in $\mcF^{(s)}_{k,i,j}$ with
shape $\la$ and tail of length $l$ in this case is
        \begin{equation}\label{case3claim2}
            s(k-1)_{l+1} (s-l-1)_{k-j-l-2} (s-k+j)! \binom{k-l-1}{i} \binom{k-l-2}{j-1}.
        \end{equation}
        
        Adding~(\ref{case1claim2}),~(\ref{case2claim2}), and~(\ref{case3claim2}),
and simplifying,
we obtain that the total number of elements in $\mcF^{(s)}_{k,i,j}$ with
shape $\la$ and tail of length $l$ is
$$s(k-1)_{l}(s-l-1)!\binom{k-l-1}{i} \binom{k-l-1}{j},$$
and the result follows, by summing over $l\ge 0$ and multiplying
by $\binom{s-1}{k-1}$, the number of choices for $\la$.
\end{proof}

In the next result, we give an explicit enumeration for vertical
paired arrays, by applying Theorem~\ref{nonemvert}. The proof
is quite technical, involving generating functions and a hypergeometric
summation.

\begin{theorem}\label{closedform}
    For $i,j\ge 0$, $k,s\ge 1$, we have
    \begin{equation*}
        v_{k,i,j}^{(s)} =
 \f{(s+i)! (s+j)!}{(s+i+j)!}\binom{s+i+j}{k-1} \left[
            \binom{k-1}{i}\binom{k-1}{j} - \binom{k-1}{s+i}\binom{k-1}{s+j}
            \right]
    \end{equation*}
\end{theorem}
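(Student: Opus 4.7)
The strategy is to reduce $v^{(s)}_{k,i,j}$ to the enumeration $f^{(s)}_{t,i',j'}$ of full vertical paired arrays provided by Theorem~\ref{nonemvert}, then simplify the resulting multiple sum using generating functions and hypergeometric manipulations.

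The reduction is by conditioning on the set of non-empty columns. In a vertical paired array the balance condition forces each column to have the same number of vertices in rows $1$ and $2$, so columns are either non-empty in both rows or empty in both. By the nonempty condition of Definition~\ref{pairarr}, every empty column must carry a mark in row~$1$, row~$2$, or both; I classify empty columns into three types with counts $a, b, c$ satisfying $a+b+c=k-t$, where $t$ is the number of non-empty columns. Choosing positions for the non-empty columns ($\binom{k}{t}$ ways), distributing empty columns into types ($\binom{k-t}{a,b,c}$ ways), and specifying a full vertical paired array on the non-empty columns with the reduced mark counts $i-a-c$ and $j-b-c$, uniquely reconstructs the original array, so
\begin{equation*}
v^{(s)}_{k,i,j}=\sum_{t=1}^{k}\binom{k}{t}\sum_{\substack{a+b+c=k-t\\a,b,c\ge 0}}\binom{k-t}{a,b,c}\,f^{(s)}_{t,\,i-a-c,\,j-b-c}.
\end{equation*}

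To convert this to a tractable form, I use $\binom{M}{i-a-c}=[u^i]u^{a+c}(1+u)^M$ (and similarly for the $v$-variable) together with the multinomial expansion to check that, writing $W:=(1+u)(1+v)$,
\begin{equation*}
\sum_{\substack{a+b+c=N\\a,b,c\ge 0}}\binom{N}{a,b,c}\binom{M}{i-a-c}\binom{M}{j-b-c}=[u^iv^j]\,W^M(W-1)^N,
\end{equation*}
since $\sum_{a+b+c=N}\binom{N}{a,b,c}u^{a+c}v^{b+c}=(u+v+uv)^N=(W-1)^N$. Substituting Theorem~\ref{nonemvert} and reindexing $m:=t-1-l$ then gives
\begin{equation*}
v^{(s)}_{k,i,j}=s!\,[u^iv^j]\sum_{t=1}^{k}\binom{k}{t}(W-1)^{k-t}\sum_{m=0}^{t-1}\binom{s-t+m}{m}W^m.
\end{equation*}
Splitting the partial sum $\sum_{m=0}^{t-1}\binom{s-t+m}{m}W^m$ as $(1-W)^{-(s-t+1)}$ minus the tail $\sum_{m\ge t}\binom{s-t+m}{m}W^m$ produces two contributions. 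The principal piece, after summing over $t$ via the binomial theorem and extracting $[u^iv^j]$, should yield the leading term $\tfrac{(s+i)!(s+j)!}{(s+i+j)!}\binom{s+i+j}{k-1}\binom{k-1}{i}\binom{k-1}{j}$; the tail piece should yield the same prefactor multiplied by $-\binom{k-1}{s+i}\binom{k-1}{s+j}$.

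The main obstacle is the hypergeometric identification in the last step, particularly isolating the precise subtractive term $\binom{k-1}{s+i}\binom{k-1}{s+j}$. The reindexing needed to recognise the tail, after the substitution $W=(1+u)(1+v)$, as a product of binomial coefficients shifted by $s$ in each variable is delicate, and is likely to require either a Pfaff--Saalsch\"utz-type $_3F_2$ identity or an Euler-type integral representation. The authors' own remark that the proof is ``quite technical, involving generating functions and a hypergeometric summation'' is consistent with this being the true crux of the argument.
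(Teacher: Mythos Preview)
Your combinatorial reduction to full vertical paired arrays is exactly what the paper does (up to a cosmetic reparametrisation: the paper writes $a+b+c=k$ with $c$ counting the non-empty columns \emph{together with} the doubly-marked empty ones, and then sums $\binom{c}{m}$ over the number $m$ of non-empty columns). Your two-variable trick collapsing the multinomial sum to $[u^iv^j]\,W^M(W-1)^N$ with $W=(1+u)(1+v)$ is correct and elegant, and you are right that the endgame needs Pfaff--Saalsch\"utz.

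The gap is in your proposed split of the partial sum. Writing
\[
\sum_{m=0}^{t-1}\binom{s-t+m}{m}W^m \;=\; (1-W)^{-(s-t+1)}\;-\;\sum_{m\ge t}\binom{s-t+m}{m}W^m
\]
is fine as an identity of formal power series in the indeterminate $W$, but once you substitute $W=(1+u)(1+v)$ the object $(1-W)^{-(s-t+1)}$ becomes $(-(u+v+uv))^{-(s-t+1)}$, a negative power of a series with zero constant term, which is not a formal power series in $u,v$; in particular $[u^iv^j]$ of it is undefined. (Note that the relevant range is $1\le t\le\min(k,s)$, so $s-t+1\ge 1$ always.) Concretely, if you push your ``principal'' piece through the $t$-sum you get
\[
\sum_{t=1}^{k}\binom{k}{t}(W-1)^{k-t}(1-W)^{-(s-t+1)}
=(1-W)^{k-s-1}\sum_{t=1}^{k}\binom{k}{t}(-1)^{k-t}
=(-1)^{k+1}(1-W)^{k-s-1},
\]
which for $k\le s$ is again a negative power of $u+v+uv$; so neither the principal nor the tail is individually meaningful after substitution, and the expected leading term $\binom{s+i+j}{k-1}\binom{k-1}{i}\binom{k-1}{j}$ cannot be read off this way.

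The paper sidesteps this by introducing a \emph{third} variable $x$ via $\binom{c}{m}=[x^c]\,x^m(1-x)^{-m-1}$, so that the inner sum becomes a genuine finite geometric series in $x$. Summing it produces a numerator $\bigl(1+x(y+z+yz)\bigr)^s-(xyz)^s$, and it is this difference of two $s$th powers---not a principal/tail split---that gives the two terms in the final formula. The $(xyz)^s$ piece yields $\binom{k-1}{s+i}\binom{k-1}{s+j}$ by a direct coefficient extraction (no hypergeometric identity needed), while the other piece is the one requiring Pfaff--Saalsch\"utz. If you want to salvage your two-variable route, the cleanest fix is to swap the order of summation (sum over $t$ first for fixed $m$ using Vandermonde, rather than summing over $m$ first), or to introduce an auxiliary variable playing the role of the paper's $x$; either way the two closed-form terms will emerge from the two ends of a telescoping/geometric sum rather than from a full-series-minus-tail decomposition.
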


\begin{proof}
If a column in a vertical paired array has no vertices, then
at least one of the cells in rows $1$ and $2$ of that column
must be marked, from the nonempty condition. Thus, 
suppose that a vertical paired array with  $s$ mixed pairs
and $k$ columns,  has $k-m$ columns with no vertices
and $m$ with a positive number of vertices (the same number in both rows of such columns). 
Of the $k-m$ columns with no vertices, suppose that $a$ are marked
in row $1$ only, $b$ are marked in row $2$ only, and that $c-m$ are marked
in both row $1$ and $2$ (we use this parameterization for convenience
in determining the summations below).
Then we have
    \begin{equation}\label{tsum}
        v^{(s)}_{k,i,j} = \sum_{\substack{a,b,c\ge 0\\a+b+c=k}}
 \frac{k!}{a!\, b!\, c!}\, S_{a,b,c},
    \end{equation}
where
\begin{equation*}
S_{a,b,c}=
\sum_{m=0}^s\binom{c}{m}f^{(s)}_{m,i-a-c+m,j-b-c+m}.
    \end{equation*} 
But, from Theorem~\ref{nonemvert}, we have
    \begin{eqnarray*}
        S_{a,b,c}
            &=& s!\!\!\!\!
\sum_{s-1\ge m-1\ge l\ge 0} \binom{c}{m}\binom{s-1- l}{m-1- l}
                {m-1- l \choose a+c-i-1- l}{m-1- l\choose b+c-j-1- l}\\
            &=&s!\, [y^{a+c-i-1}z^{b+c-j-1}]
\sum_{s-1\ge m-1\ge l\ge 0} \binom{c}{m}\binom{s-1- l}{m-1- l}
            \big(yz\big)^{l}\big( (1+y)(1+z)\big)^{m-1-l},
    \end{eqnarray*}
where we use the notation $[A]B$ to denote the coefficient of $A$ in the expansion of $B$.
Now
    \begin{equation*}
\binom{c}{m}=\binom{-m-1}{c-m}(-1)^{c-m}
        =[x^{c}]x^m(1-x)^{-m-1},
    \end{equation*}
which gives
    \begin{equation}\label{coeffSF}
        S_{a,b,c}=s!\, [x^{c}y^{a+c}z^{b+c}]G(x,y,z),
    \end{equation}
    where, summing over $m$ by the binomial theorem, we have
    \begin{eqnarray*}
    G(x,y,z)
        &=& \f{x\, y^{i+1}z^{j+1}}{(1-x)^2}\sum_{l =0}^{s-1}
            \left( \f{xyz}{1-x}\right) ^{l}
            \left( 1+\f{x(1+y)(1+z)}{1-x}\right)^{s-l -1}\\
        &=&\f{x\, y^{i+1}z^{j+1}}{(1-x)^2}
            \f{\left(1+\f{x(1+y)(1+z)}{1-x} \right)^{s}-\left(\f{xyz}{1-x}\right) ^s}{1+\f{x(1+y)(
            1+z)}{1-x}-\f{xyz}{1-x}}\\
        &=&\f{x\, y^{i+1}z^{j+1}}{(1-x)^{s+1}}
            \f{\big( 1+x(y+z+yz)\big) ^s-\big( xyz\big) ^s}{1+x(y+z)}.
    \end{eqnarray*}
    But, changing variables in~(\ref{coeffSF}), we have
    \begin{equation*}
        S_{a,b,c}=s!\, [x^{0}y^{a+c}z^{b+c}]x^k G(x,\f{y}{x},\f{z}{x}),
    \end{equation*}
    so from~(\ref{tsum}) we obtain
    \begin{eqnarray*}
        t^{(s)}_{k,i,j}
            &=&s!\, [x^0y^{k}z^{k}]x^k (1+y+z)^k G(x,\f{y}{x},\f{z}{x})\\
            &=&s!\, [x^0y^{k}z^{k}]x^{k-i-j-s-1}y^{i+1}z^{j+1}\f{(1+y+z)^{k-1}}{(1-x)^{s+1}}
                \Big(\big( x(1+y+z)+yz\big) ^s-\big( yz\big) ^s\Big)\\
            &=&R_1-R_2,
    \end{eqnarray*}
    where
    \begin{eqnarray*}
        R_2&=&s!\, [x^{i+j+s-k+1}y^{k-i-1}z^{k-j-1}]\f{(1+y+z)^{k-1}}{(1-x)^{s+1}}
            (yz)^s\\                      
        &=&s!{2s+i+j-k+1\choose s}\f{(k-1)!}{(k-s-i-1)!(k-s-j-1)!(2s+i+j-k+1)!}\\
        &=& \f{(s+i)! (s+j)!}{(s+i+j)!}{s+i+j \choose k-1}
            {k-1 \choose s+i}{k-1 \choose s+j},\\
            R_1&=&s!\, [x^{i+j+s-k+1}y^{k-i-1}z^{k-j-1}]\f{(1+y+z)^{k-1}}{(1-x)^{s+1}}
            \big( x(1+y+z)+yz\big) ^s\\
        &=&s!\, [x^{i+j+s-k+1}y^{k-i-1}z^{k-j-1}]\sum_{m\ge 0}{s\choose m}
            x^{s-m}(1+y+z)^{k+s-m-1}(yz)^{m}(1-x)^{-s-1}\\
        &=&s!\sum_{m\ge 0}{s\choose m}{s+i+j+m-k+1\choose s}
            \f{(s-m+k-1)!}{(k-m-i-1)!(k-m-j-1)!(s+i+j+m-k+1)!}.
\end{eqnarray*}
Now, for this latter sum over $m\ge 0$, we observe that the ratio of
the $m+1$st term to the $m$th term is a rational function of $m$, which
implies that it is a hypergeometric sum. In
particular, using the standard notation for hypergeometric series, we have
\begin{eqnarray*}
R_1&=&\f{(s+k-1)!}{(k-i-1)! (k-j-1)! (i+j-k+1)!}\, {}_{3}F_{2}
 \left(  \begin{array}{c} i+1-k, \ j+1-k, \ -s \\
i+j-k+2, \ 1-s-k
\end{array}  ; 1 \right) \\
        &=&\f{(s+k-1)!}{(k-i-1)! (k-j-1)! (i+j-k+1)!}
            \f{{s+i \choose s}{s+j \choose s}}{{s+k-1 \choose s}{s+i+j-k+1 \choose s}}\\
        &=&  \f{(s+i)! (s+j)!}{(s+i+j)!}{s+i+j \choose k-1}
            {k-1 \choose i}{k-1 \choose j},
    \end{eqnarray*}
    where the second last equality follows from the 
    Pfaff-Saalsch\"utz Theorem for ${}_3 F_2$ hypergeometric summations 
(see, \textit{e.g.}, Theorem 2.2.6 on
page 69 of~[\ref{aar}]).
    The result follows immediately.
\end{proof}

We finish with the proof of our main result, now that we have completed
all the intermediate results in our reduction.
\vspace{.1in}

\noindent
\textit{Proof of Theorem~\ref{mainthm}}. From Proposition~\ref{fallfac},  
~(\ref{equivbe}) and Theorems~\ref{redundmin},~\ref{nonmixedprop}, we obtain 
$$A^{(s)}_{p,q}(x)= \sum_{k\ge 1}{x\choose k}\sum_{i,j\ge 0} 
{p\choose 2i}(2i-1)!!{q\choose 2j}(2j-1)!!
 (p-2i)_{\tfh(p-s-2i)}(q-2j)_{\tfh(q-s-2j)}v^{(s)}_{k,\tfh(p-s-2i),\tfh(q-s-2j)}.$$
But, simplifying, we obtain
$$ {p\choose 2i}(2i-1)!!{q\choose 2j}(2j-1)!!
 (p-2i)_{\tfh(p-s-2i)}(q-2j)_{\tfh(q-s-2j)}=
\f{p!q!}{2^{i+j}i!j!(\tfh(p+s)-i)!(\tfh(q+s)-j)!}
,$$
and the result follows from
Theorem~\ref{closedform}.
\hfill{\Large$\Box$}

\section*{Acknowledgements}

This work was supported by a Discovery Grant from NSERC (IG)
and by a USRA Award from NSERC (WS). We are grateful to Andu
Nica for helpful discussions and suggestions. 

\section*{References}

\begin{enumerate}

\item\label{aar}
G. E. Andrews, R. Askey and R. Roy, ``Special functions'', Encyclopedia
of Mathematics and its Applications, Vol. 71, Cambridge University Press, 1999.

\item\label{ht}
U. Haagerup and S. Thorbjornsen, {\em Random matrices with 
complex Gaussian entries},
Expositiones Math.,
{\bf 21} (2003), 293--337.

\item\label{gn}
I. P. Goulden and A. Nica, {\em A direct bijection for the Harer-Zagier
formula},
J. Comb. Theory (A),
{\bf 111} (2005), 224--238.

\item\label{hz}
J. Harer and D. Zagier, {\em The Euler characteristic
of the moduli space of curves},
Inventiones Mathematicae,
{\bf 85} (1986), 457--486.

\item\label{iz}
C. Itzykson and J.-B. Zuber, {\em Matrix integration and combinatorics of modular groups},
Comm. Math. Phys.,
{\bf 134} (1990), 197--207.

\item\label{j}
D. M. Jackson, {\em On an integral representation for the genus series
for $2$-cell embeddings},
Trans. Amer. Math. Soc.,
{\bf 344} (1994), 755--772.

\item\label{ke}
S. Kerov, {\em Rook placements on Ferrers boards, and matrix integrals} 
(English translation), J. Math. Sci. (New York),
{\bf 96} (1999), 3531--3536.

\item\label{k}
M. Kontsevich, {\em Intersection theory on the moduli space of curves
and the matrix Airy function},
Comm. Math. Phys.,
{\bf 147} (1992), 1--23.

\item\label{lz}
S. K. Lando and A. K. Zvonkin,
``Graphs on surfaces and their applications'', Springer-Verlag,
Berlin, Heidelberg, 2004.

\item\label{l}
B. Lass, {\em D\'emonstration combinatoire de la formule
de Harer-Zagier},
C. R. Acad. Sci. Paris,
{\bf 333}, S\'erie I (2001), 155--160.

\item\label{mn}
J. A. Mingo and A. Nica, {\em Annular noncrossing permutations and partitions, and
second-order asymptotics for random matrices},
International Math. Research Notices,
no. 28 (2004), 1413 -- 1460.

\item\label{p}
R. C. Penner, {\em Perturbative series and the moduli space of Riemann surfaces},
J. Diff. Geometry,
{\bf 27} (1988), 35--53.

\item\label{stan}
R. P. Stanley, ``Enumerative Combinatorics'', Vol. 2, Cambridge University Press, 1999.


\item\label{t}
W. T. Tutte, ``Graph Theory'', Addison-Wesley, London, 1984.

\item\label{z}
D. Zagier, {\em On the distribution of the number of cycles of elements
in symmetric groups},
Nieuw Archief voor Wiskunde,
{\bf 13} (1995), 489--495.

\item\label{zv}
A. Zvonkin, {\em Matrix integrals and map enumeration: An accessible introduction},
Combinatorics and Physics (Marseilles, 1995),  Math. Comput. Modelling,
{\bf 26} (1997), 281--304.

\end{enumerate}

\end{document}